\documentclass[11pt]{amsart}
\pdfoutput=1

% Packages
\usepackage{amssymb}
\usepackage{microtype}
\usepackage{mathtools}
\usepackage{enumitem}

\usepackage[hmargin=1.25in,vmargin=1in,marginparwidth=1in]{geometry}

\usepackage[numbers]{natbib}

\usepackage{hyperref}
\hypersetup{
  pdftitle={Hamiltonian mechanics and Lie algebroid connections},
  pdfauthor={Jiawei Hu and Ari Stern},
  pdfsubject={MSC 2020: 53D17},
  bookmarksopen=false
}

\usepackage[capitalize,nameinlink]{cleveref}

% make sure that math in subsection headers is bold
\makeatletter
\g@addto@macro\bfseries{\boldmath}
\makeatother

\theoremstyle{plain}
\newtheorem{theorem}{Theorem}[section]
\newtheorem{lemma}[theorem]{Lemma}
\newtheorem{corollary}[theorem]{Corollary}

\theoremstyle{definition}
\newtheorem{definition}[theorem]{Definition}
\newtheorem{example}[theorem]{Example}

\theoremstyle{remark}
\newtheorem{remark}[theorem]{Remark}

\begin{document}

\title{Hamiltonian mechanics and Lie algebroid connections}

\author{Jiawei Hu}
\address{Department of Mathematics, University of Bonn}
\email{s34jhu@uni-bonn.de}

\author{Ari Stern}
\address{Department of Mathematics and Statistics,
  Washington University in St.~Louis}
\email{stern@wustl.edu}

\begin{abstract}
  We develop a new, coordinate-free formulation of Hamiltonian
  mechanics on the dual of a Lie algebroid. Our approach uses a
  connection, rather than coordinates in a local trivialization, to
  obtain global expressions for the horizontal and vertical
  dynamics. We show that these dynamics can be obtained in two
  equivalent ways: (1) using the canonical Lie--Poisson structure,
  expressed in terms of the connection; or (2) using a novel
  variational principle that generalizes Hamilton's phase space
  principle.
\end{abstract}

\maketitle

\section{Introduction}

The dual $\mathfrak{g}^{\ast}$ of a Lie algebra $\mathfrak{g}$ has a
canonical (up to sign) \emph{Lie--Poisson structure}, defined by the
$ ( \pm ) $ Lie--Poisson brackets
\begin{equation}
  \label{e:algebra_bracket}
  \{ F, G \} _\pm (\mu) \coloneqq \pm \Biggl\langle \mu , \biggl[ \frac{ \delta F }{ \delta \mu } , \frac{ \delta G }{ \delta \mu } \biggr]\Biggr\rangle.
\end{equation}
Here, $ F, G \colon \mathfrak{g}^{\ast} \rightarrow \mathbb{R} $,
$ \mu \in \mathfrak{g}^{\ast} $, $ \langle \cdot , \cdot \rangle $ is
the duality pairing between $ \mathfrak{g}^{\ast} $ and
$\mathfrak{g}$, $ [ \cdot , \cdot ] $ is the Lie bracket on
$ \mathfrak{g} $, and $ \delta F / \delta \mu \in \mathfrak{g} $ is
the variational derivative defined by
$ \bigl\langle \delta \mu , \delta F / \delta \mu \bigr\rangle = \lim
_{ \epsilon \rightarrow 0 } \bigl[ F ( \mu + \epsilon \, \delta \mu )
- F (\mu) \bigr] / \epsilon $ for all
$ \delta \mu \in \mathfrak{g}^{\ast} $. Using the $ (\pm) $ bracket, a
Hamiltonian $ H \colon \mathfrak{g}^{\ast} \rightarrow \mathbb{R} $
gives rise to the dynamics
\begin{equation}
  \label{e:lie-poisson}
  \dot{ \mu } = \mp \operatorname{ad} ^\ast _{ \delta H / \delta \mu } \mu ,
\end{equation}
which are called the \emph{Lie--Poisson equations}
(\citet{MaRa1999}). These arise in diverse applications ranging from
rigid body mechanics to incompressible fluid dynamics.

More generally, one may define a Lie--Poisson structure on the dual of
a \emph{Lie algebroid} (\citet{Courant1990, Weinstein1996}). This
includes as special cases not only \eqref{e:algebra_bracket} but also
the canonical Poisson structure on a cotangent bundle or its quotient
by a Lie group action. As such, Lie algebroid duals provide a rich
setting for Hamiltonian mechanics and reduction.  However, in contrast
to the global, coordinate-free form of \eqref{e:lie-poisson},
Hamilton's equations in this more general setting have only been
expressed with respect to coordinates in a local trivialization:
cf.~\citet*{Weinstein1996,Martinez2001,LeMaMa2005}.

In this paper, we develop a new, coordinate-free formulation of
Hamiltonian mechanics on the dual of a Lie algebroid. Our approach
uses a connection, rather than coordinates in a local trivialization,
to obtain global expressions for the horizontal and vertical
dynamics. In particular, the Lie--Poisson equations
\eqref{e:lie-poisson} are obtained as a special case, with
$ \operatorname{ad} ^\ast $ arising from the connection and
$ \delta H / \delta \mu $ corresponding to the vertical part of
$ \mathrm{d} H $. This can be viewed as a Hamiltonian counterpart to
the paper \citet*{LiStTa2017}, which provides the Lagrangian side of
this story.

The paper is organized as follows:
\begin{itemize}
\item \Cref{sec:algebroids} reviews the basic ideas of Lie algebroids,
  previous work on generalized Lie--Poisson dynamics in local
  coordinates, and Lie algebroid connections.

\item \Cref{sec:lie-poisson} develops a coordinate-free expression for
  the Lie--Poisson structure on a Lie algebroid dual relative to a
  connection, and uses this to derive the horizontal and vertical
  dynamics arising from a Hamiltonian.

\item \Cref{sec:variational} introduces a novel variational principle
  for Hamiltonian dynamics on Lie algebroid duals, which generalizes
  Hamilton's phase space principle in the case of a cotangent
  bundle. Using a connection, it is shown that paths satisfying the
  variational principle are precisely solutions to the horizontal and
  vertical Lie--Poisson equations derived in the preceding section. As
  special cases, this also generalizes the Lie--Poisson and
  Hamilton--Poincar\'e variational principles of \citet{CeMaPeRa2003}.

\item \Cref{sec:lagrangian} concludes the paper with a discussion of
  the relationship between the Hamiltonian and Lagrangian formalisms,
  linking the results of the present paper with those of
  \citet*{LiStTa2017}.
\end{itemize} 

\subsection*{Acknowledgments}

The authors wish to thank Rui Loja Fernandes for helpful discussions
during the early stages of this project. Jiawei Hu was supported in
part by the Freiwald Scholars program in the Department of Mathematics
and Statistics at Washington University in St.~Louis. Ari Stern was
supported in part by NSF grants DMS-1913272 and DMS-2208551.

\section{Lie algebroid preliminaries}
\label{sec:algebroids}

\subsection{Lie algebroids} We begin with the definition of a Lie
algebroid, along with a few standard examples. See
\citet{Mackenzie2005} for a comprehensive reference.

\begin{definition}
  A \emph{Lie algebroid} is a real vector bundle
  $ \tau \colon A \rightarrow Q $ equipped with a Lie bracket
  $ [ \cdot , \cdot ] $ on the space of sections $ \Gamma (A) $ and a
  bundle map $ \rho \colon A \rightarrow T Q $, called the
  \emph{anchor}, satisfying the Leibniz rule
  \begin{equation*}
    [ X, f Y ] = \rho (X) [f] Y + f [ X, Y ] ,
  \end{equation*}
  for all $ f \in C ^\infty (Q) $ and $ X, Y \in \Gamma (A) $.
\end{definition}

\begin{remark}
  Here and henceforth, we use the notation
  $ v [ f ] \coloneqq \langle \mathrm{d} f , v \rangle $ for a tangent
  vector or vector field acting as a derivation on smooth functions.
\end{remark}

\begin{example}
  \label{e:tangent}
  The tangent bundle $ \tau \colon T Q \rightarrow Q $ is a Lie
  algebroid, where $ [ \cdot , \cdot ] $ is the Jacobi--Lie bracket on
  vector fields and $ \rho \colon T Q \rightarrow T Q $ is the
  identity map.
\end{example}

\begin{example}
  \label{e:algebra}
  A Lie algebra $\mathfrak{g}$ can be interpreted as a Lie algebroid
  over a single point
  $ \tau \colon \mathfrak{g} \rightarrow \bullet $, where
  $ [ \cdot , \cdot ] $ is the bracket on $\mathfrak{g}$ and $ \rho $
  is trivial.
\end{example}

\begin{example}
  \label{e:atiyah}
  Given a principal $G$-bundle $ Q \rightarrow Q / G $, one may form
  the \emph{Atiyah algebroid}
  $ \tau \colon T Q / G \rightarrow Q / G $, where
  $ [ \cdot , \cdot ] $ agrees with the Jacobi--Lie bracket of
  $G$-invariant vector fields on $Q$, and where $\rho$ agrees with the
  ($G$-invariant) tangent map of the principal bundle
  projection. \Cref{e:tangent,e:algebra} are the special cases
  $ G = \{ e \} $ and $ Q = G $, respectively.
\end{example}

We are often interested in a particular class of paths in Lie
algebroids, especially for applications in geometric mechanics.

\begin{definition}
  Let $ a \colon I \rightarrow A $ be a path in $A$, where $I$ is an
  interval (interpreted as time), and let
  $ q = \tau \circ a \colon I \rightarrow Q $ be the corresponding
  base path in $Q$. We say that $a$ is an \emph{$A$-path} over $q$ if
  it satisfies $ \dot{q} (t) = \rho \bigl( a (t) \bigr) $ for all
  $ t \in I $.
\end{definition}

For example, $ T Q $-paths are just the tangent prolongations of paths
in $Q$. (In the context of geometric mechanics,
\citet{YoMa2006a,YoMa2006b} call this the ``second-order condition.'')
On the other hand, every path in $\mathfrak{g}$ is a
$\mathfrak{g}$-path, since the condition to be satisfied is trivial.

\subsection{The generalized Lie--Poisson structure on \texorpdfstring{$ A ^\ast $}{A*}}

Let $ \pi \colon A ^\ast \rightarrow Q $ denote the dual bundle of a
Lie algebroid $A$.

\subsubsection{The $ ( \pm ) $ Lie--Poisson brackets}
In order to define the Lie--Poisson structure on $ A ^\ast $, we first
introduce some useful notation adopted from \citet{Marle2008}.

\begin{definition}
  Given a section $ X \in \Gamma (A) $, let
  $ \Phi _X \in C ^\infty ( A ^\ast ) $ denote the fiberwise-linear
  function $ \Phi _X ( p ) = \langle p, X \rangle $, where
  $ \langle \cdot , \cdot \rangle $ is the duality pairing between
  $ A ^\ast $ and $A$.
\end{definition}

\begin{subequations}
  \label{e:algebroid_bracket_relations}
  \begin{definition}
    The $ ( \pm ) $ \emph{Lie--Poisson bracket} on $ A ^\ast $ is the
    unique bracket satisfying
    \begin{equation}
      \label{e:algebroid_bracket_XY}
      \{ \Phi _X , \Phi _Y \}_\pm = \pm \Phi _{ [X, Y] } ,
    \end{equation}
    for all $ X, Y \in \Gamma (A) $.
  \end{definition}

  For any $ f, g \in C ^\infty (Q) $, replacing $X$ and $Y$ in
  \eqref{e:algebroid_bracket_XY} by $ f X $ and $ g Y $ shows that, in
  order for $ \{ \cdot , \cdot \} _\pm $ to satisfy the Leibniz rule,
  it must also satisfy
  \begin{align}
    \{ \Phi _X , g \circ \pi \} _\pm &= \pm \rho (X) [g] \circ \pi , \label{e:algebroid_bracket_Xg}\\
    \{ f \circ \pi , g \circ \pi \}_\pm &= 0 \label{e:algebroid_bracket_fg}.
  \end{align}
\end{subequations}
Since every covector in $ T ^\ast _p A ^\ast $ can be written as
$ \mathrm{d} ( \Phi _X + f \circ \pi ) (p) $ for some
$ X \in \Gamma (A) $ and $ f \in C ^\infty (Q) $ \citep[Lemma
6.3.2]{Marle2008}, this completely defines the Lie--Poisson structure
on $ A ^\ast $. \citet[Theorem 2.1.4]{Courant1990} shows that the
converse also holds: any fiberwise-linear Poisson structure on a dual
vector bundle $ E ^\ast \rightarrow Q $ determines a Lie algebroid
structure on $ E \rightarrow Q $.

\subsubsection{Local coordinates and Hamiltonian dynamics}
Let $ q ^i $ be local coordinates for $Q$ and $ \{ e _I \} $ be a
local basis of sections of $A$. Let $ C _{ I J } ^K $ and
$ \rho ^i _I $ be the local-coordinate representations of
$ [ \cdot , \cdot ] $ and $\rho$, defined by
\begin{equation*}
  [ e _I , e _J ] = C _{ I J } ^K e _K , \qquad \rho ( e _I ) = \rho ^i _I \frac{ \partial }{ \partial q ^i } .
\end{equation*}
(We use the Einstein index convention, where there is an implicit sum
over repeated indices.)  In the special case of a Lie algebra,
$ C _{ I J } ^K $ are known as \emph{structure constants}; in the
general case, they depend smoothly on $ q \in Q $ and are called
\emph{structure functions}.

It follows that $ q ^i $ and $ \mu _I = \Phi _{ e _I } $ are local
coordinate functions on $ A ^\ast $, where $ \mu _I $ gives the
coefficient in the dual basis $ \{ e ^I \} $ of sections of
$ A ^\ast $. We can now express the bracket relations
\eqref{e:algebroid_bracket_relations}, in terms of these local coordinate
functions, as
\begin{align*}
  \{ \mu _I , \mu _J \} _\pm &= \pm C _{ I J } ^K  \mu _K ,\\
  \{ \mu _I , q ^i \} _\pm &= \pm \rho ^i _I ,\\
  \{ q ^i , q ^j \} _\pm &= 0 .
\end{align*}
(We commit a slight abuse of notation by identifying $ q ^i $ with
$ q ^i \circ \pi $.)  It follows that
\begin{equation}
  \label{e:algebroid_bracket_coordinates}
  \{ F, G \} _\pm = \pm C _{ I J } ^K \frac{ \partial F }{ \partial \mu _I } \frac{ \partial G }{ \partial \mu _J } \mu _K \pm \rho ^i _I \biggl( \frac{ \partial F }{ \partial \mu _I } \frac{ \partial G }{ \partial q ^i } - \frac{ \partial F }{ \partial q ^i } \frac{ \partial G }{ \partial \mu _I } \biggr) ,
\end{equation}
for arbitrary functions $ F, G \in C ^\infty ( A ^\ast ) $.

Now, given a Hamiltonian $ H \in C ^\infty ( A ^\ast) $, we recall
that Hamilton's equations are defined by
$ \dot{ F } = \{ F, H \} _\pm $ for all
$ F \in C ^\infty ( A ^\ast ) $. Taking $F$ to be the coordinate
functions $ q ^i $ and $ \mu _I $, we obtain
\begin{subequations}
  \label{e:lie-poisson_coordinates}
  \begin{align}
    \dot{q} ^i &= \mp \rho ^i _I \frac{ \partial H }{ \partial \mu _I } , \label{e:lie-poisson_coordinates_q}\\
    \dot{ \mu } _I &= \pm C _{ I J } ^K \frac{ \partial H }{ \partial \mu _J }  \mu _K \pm \rho ^i _I \frac{ \partial H }{ \partial q ^i } , \label{e:lie-poisson_coordinates_mu}
  \end{align}
\end{subequations}
which are local-coordinate expressions of the generalized Lie--Poisson
equations on $ A ^\ast $.

\begin{example}
  \label{ex:tq_poisson}
  Let $ \tau \colon T Q \rightarrow Q $ be the tangent bundle of $Q$
  and $ \pi \colon T ^\ast Q \rightarrow Q $ be the cotangent bundle.
  Local coordinates $ q ^i $ on $Q$ give rise to a local basis of
  sections $ \partial / \partial q ^i $ of $TQ$. From this, we get
  so-called canonical coordinates $ q ^i , p _i $ on $ T ^\ast Q $,
  where $ p _i = \Phi _{ \partial / \partial q ^i } $ in the notation
  used above. (Due to the correspondence between base and fiber
  coordinates, we use lowercase indices $i, j, k $ for both.) In these
  coordinates, we have
  \begin{equation*}
    C _{ i j } ^k = 0 , \qquad \rho ^i _j = \delta ^i _j ,
  \end{equation*}
  where $ \delta ^i _j $ is the Kronecker delta. Thus,
  \eqref{e:algebroid_bracket_coordinates} gives the $ ( \pm ) $
  brackets on $ T ^\ast Q $,
  \begin{equation}
    \label{e:cotangent_bracket}
    \{ F , G \} _\pm = \pm \biggl( \frac{ \partial F }{ \partial p _i } \frac{ \partial G }{ \partial q ^i } - \frac{ \partial F }{ \partial q ^i } \frac{ \partial G }{ \partial p _i } \biggr),
  \end{equation}
  with respect to which Hamilton's equations
  \eqref{e:lie-poisson_coordinates} are
  \begin{subequations}
    \label{e:cotangent_equations}
    \begin{align}
      \dot{q} ^i &= \mp \frac{ \partial H }{ \partial p _i } , \label{e:cotangent_equations_q}\\
      \dot{p} _i &= \pm \frac{ \partial H }{ \partial q ^i }. \label{e:cotangent_equations_p}
    \end{align}
  \end{subequations}
  From this, we see that the $ ( - ) $ bracket gives the usual form of
  the canonical Poisson structure and Hamilton's equations on
  $ T ^\ast Q $, while $ ( + ) $ gives the opposite sign convention.
\end{example}

\begin{example}
  Let $ \tau \colon \mathfrak{g} \rightarrow \bullet $ be a Lie
  algebra and $ \pi \colon \mathfrak{g} ^\ast \rightarrow \bullet $
  its dual, considered as vector bundles over a single point. Given a
  basis $ \{ e _I \} $ of $\mathfrak{g}$, the coefficients
  $ C _{ I J } ^K $ are the structure constants of $ \mathfrak{g} $,
  and the anchor is trivial. In these coordinates, the $ ( \pm ) $
  brackets \eqref{e:algebroid_bracket_coordinates} on
  $ \mathfrak{g}^{\ast} $ are
  \begin{equation*}
    \{ F , G \} _\pm = \pm C _{ I J } ^K \frac{ \partial F }{ \partial \mu _I } \frac{ \partial G }{ \partial \mu _J } \mu _K ,
  \end{equation*}
  with respect to which Hamilton's equations
  \eqref{e:lie-poisson_coordinates} are
  \begin{equation*}
    \dot{ \mu } _I = \pm C _{ I J } ^K \frac{ \partial H }{ \partial \mu _J } \mu _K .
  \end{equation*}
  These are precisely local-coordinate expressions for the $ ( \pm ) $
  Lie--Poisson bracket \eqref{e:algebra_bracket} and Lie--Poisson
  equations \eqref{e:lie-poisson}, respectively.
\end{example}

\subsection{Connections and variations of $A$-paths}

We recall the notion of connection on a Lie algebroid due to
\citet{CrFe2003} (see also \citet{Fernandes2002}), along with the
important role that such connections play in calculus of variations
for $A$-paths.

\begin{definition}
  \label{def:a-connection}
  Given a Lie algebroid $ A \rightarrow Q $ an \emph{$A$-connection}
  on a vector bundle $ E \rightarrow Q $ is an $\mathbb{R}$-bilinear
  map
  $ \nabla \colon \Gamma (A) \times \Gamma (E) \rightarrow \Gamma (E)
  $, $ ( X , s ) \mapsto \nabla _X s $, that is
  $ C ^\infty (Q) $-linear in the first argument and satisfies a
  Leibniz rule in the second, i.e.,
  \begin{equation*}
    \nabla _{ f X } s = f \nabla _X s , \qquad \nabla _X ( f s ) = \rho (X) [f] s + f \nabla _X s ,
  \end{equation*}
  for all $ f \in C ^\infty (Q) $.
\end{definition}

An $A$-connection on a vector bundle naturally induces an
$A$-connection on the dual bundle.

\begin{definition}
  Given an $A$-connection $ \nabla $ on $E$, the \emph{dual
    connection} $ \nabla ^\ast $ on $ E ^\ast $ is given by
  \begin{equation*}
    \langle \nabla ^\ast _X \sigma , s \rangle = \rho (X) \bigl[ \langle \sigma, s \rangle \bigr] - \langle \sigma, \nabla _X s \rangle ,
  \end{equation*}
  where $ X \in \Gamma (A) $, $ \sigma \in \Gamma ( E ^\ast ) $, and
  $ s \in \Gamma (E) $.
\end{definition}

For example, a $ T Q $-connection is the usual notion of a connection
on a vector bundle. Given a $ T Q $-connection $ \nabla $ on $A$,
there are two induced $A$-connections on $A$, which are denoted by
$ \nabla $ and $ \overline{ \nabla } $:
\begin{equation*}
  \nabla _X Y \coloneqq \nabla _{ \rho (X) } Y , \qquad \overline{ \nabla } _X Y \coloneqq \nabla _{ \rho (Y) } X + [X,Y].
\end{equation*}
In particular, if $ \nabla $ is the trivial $ T \bullet $-connection
on $ \mathfrak{g} \rightarrow \bullet $, then the induced
$\mathfrak{g}$-connections are
\begin{equation*}
  \nabla _\xi \eta = 0 , \qquad \overline{ \nabla } _\xi \eta = [ \xi, \eta ] = \operatorname{ad} _\xi \eta ,
\end{equation*}
for $ \xi, \eta \in \mathfrak{g} $. For $A$-connections on $A$ itself,
such as these, we have a notion of torsion.

\begin{definition}
  The \emph{torsion} of an $A$-connection $ \nabla $ on $A$ is defined
  to be
  \begin{equation*}
    T ( X, Y ) \coloneqq \nabla _X Y - \nabla _Y X - [X,Y] .
  \end{equation*}
\end{definition}
We note that
$ T \colon \Gamma (A) \times \Gamma (A) \rightarrow \Gamma (A) $ is
$ C ^\infty (Q) $-bilinear, since the Leibniz-rule terms from the
connection cancel with those from the bracket, so we may treat $T$ as
a tensor. Furthermore, the torsion of $ \nabla $ may be written
$ T ( X , Y ) = \nabla _X Y - \overline{ \nabla } _X Y $, which is
seen to be tensorial since it is the difference of two connections.

In addition to using an $A$-connection $ \nabla $ to differentiate
along sections, we will also use it to differentiate along paths.

\begin{definition}
  Suppose $ \nabla $ is an $A$-connection on $E$. Given an $A$-path
  $a$ over a base path $q$, choose a time-dependent section $\xi$ of
  $A$ such that $ \xi \bigl( t, q (t) \bigr) = a (t) $. Similarly,
  given a path $u$ in $E$ over the same base path $q$, choose a
  time-dependent section $\eta$ of $E$ such that
  $ \eta \bigl( t, q (t) \bigr) = u (t) $. We then define
  \begin{equation*}
    \nabla _a u (t) \coloneqq \partial _t \eta \bigl( t, q (t) \bigr) + \nabla _\xi \eta \bigl( t, q (t) \bigr) ,
  \end{equation*}
  which is independent of the choice of $\xi$ and $\eta$.
\end{definition}

Finally, we recall the notion of admissible variations of $A$-paths
from \citet{CrFe2003}, which can be readily expressed in terms of a
connection. As in \citep{CrFe2003}, let $ \widetilde{ P } (A) $ denote
the Banach manifold of $ C ^1 $ paths $I \rightarrow A$ having
$ C ^2 $ base paths $I \rightarrow Q$, and let
$ P (A) \subset \widetilde{ P } (A) $ denote the Banach submanifold of
$A$-paths.

\begin{definition}
  \label{def:a-path_variation}
  An \emph{admissible variation} of $a \in P (A) $ is a tangent vector
  $ X _{ b, a } \in T _a P (A) $, where $ b \in \widetilde{ P } (A) $
  covers the same base path and vanishes at the endpoints of $I$, such
  that the vertical and horizontal components relative to a
  $ T Q $-connection $ \nabla $ on $A$ are
  \begin{equation*}
    X _{ b, a } ^{\mathrm{ver}} = \overline{ \nabla } _a b , \qquad X _{ b, a } ^{\mathrm{hor}} = \rho (b).
  \end{equation*}
  By vertical and horizontal, we mean the components in the splitting
  $ T _a A \cong A _q \oplus T _q Q $ induced by $ \nabla $ for each
  $ t \in I $. Note that $ X _{ b , a } ^{\mathrm{hor}} \in T _q Q $
  is independent of the choice of connection.
\end{definition}

\begin{example}
  Recall that $ T Q $-paths are tangent prolongations of base paths
  $ q \colon I \rightarrow Q $. Since the anchor of $ T Q $ is the
  identity, admissible variations are completely determined by
  $ X _{ b , a } ^{\mathrm{hor}} = b $, which is an arbitrary
  variation of the base path (usually written $ \delta q $).
\end{example}

\begin{example}
  For a Lie algebra, recall that $\mathfrak{g}$-paths are arbitrary
  paths. Taking $ \nabla $ to be the trivial connection on
  $\mathfrak{g}$, we see that admissible variations of $\xi$ have the
  restricted form
  \begin{equation*}
    \delta \xi = \dot{ \eta } + \operatorname{ad} _\xi \eta ,
  \end{equation*}
  where $\eta$ is arbitrary.  In the context of Lagrangian mechanics
  on Lie algebras, these restrictions on admissible variations are
  called \emph{Lin constraints} (\citet[Chapter 13]{MaRa1999}).
\end{example}

\section{Lie--Poisson structure and equations relative to a
  connection}
\label{sec:lie-poisson}

\subsection{Splitting of \texorpdfstring{$ T ^\ast A ^\ast $}{T*A*}} As in the previous
section, let $ \tau \colon A \rightarrow Q $ be a Lie algebroid and
$ \pi \colon A ^\ast \rightarrow Q $ its dual. A $ TQ $-connection
$ \nabla $ on $A$ gives a splitting of $ T A $ into horizontal and
vertical subbundles, which induces a natural splitting of
$ T ^\ast A ^\ast $ as well.

The following lemma gives a convenient coordinate-free expression for
this splitting. This result seems like it ought to be standard, and it
may be known to experts, but we were unable to find it stated
explicitly in the literature. We remark that although the result is
stated for a Lie algebroid, which is our application of interest, it
holds for connections on vector bundles more generally.

\begin{lemma}
  \label{lem:splitting}
  A $ T Q $-connection $ \nabla $ on $A$ induces a splitting
  $ T ^\ast _p A ^\ast \cong A _q \oplus T ^\ast _q Q $, where
  $ p \in A ^\ast _q $. This splitting is characterized by the
  following condition: For all $ X \in \Gamma (A) $ and
  $ f \in C ^\infty (Q) $,
  \begin{subequations}
    \label{e:splitting}
    \begin{align}
      \mathrm{d} ( \Phi _X + f \circ \pi ) ^{\mathrm{ver}} (p)
      &= X (q) ,\label{e:splitting_ver}\\
      \mathrm{d} ( \Phi _X + f \circ \pi ) ^{\mathrm{hor}} (p)
      &= \langle p, \nabla X \rangle + \mathrm{d} f (q) \label{e:splitting_hor}.
    \end{align}
  \end{subequations}
  By $ \langle p, \nabla X \rangle \in T ^\ast _q Q $, we mean the
  covector whose pairing with $ v \in T _q Q $ is
  $ \langle p , \nabla _v X \rangle $.
\end{lemma}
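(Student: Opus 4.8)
The plan is to construct the splitting explicitly from the connection and then verify the characterization \eqref{e:splitting} by testing the covectors $\mathrm{d}(\Phi_X + f\circ\pi)(p)$ against vertical and horizontal tangent vectors. First I would pass from $\nabla$ to the dual connection $\nabla^{\ast}$ on $A^{\ast}$ and use $\nabla^{\ast}$ to define the horizontal subbundle $H \subset T A^{\ast}$ complementary to the canonical vertical subbundle $V = \ker \mathrm{d}\pi$. There are canonical identifications $V_p \cong A^{\ast}_q$ (because $\pi \colon A^{\ast} \to Q$ is a vector bundle) and $H_p \cong T_q Q$ (via $\mathrm{d}\pi$). Dualizing $T_p A^{\ast} = H_p \oplus V_p$ gives $T^{\ast}_p A^{\ast} = H_p^{\circ} \oplus V_p^{\circ}$, where the annihilator $H_p^{\circ}$ (covectors vanishing on horizontal vectors) is canonically $(V_p)^{\ast} \cong (A^{\ast}_q)^{\ast} \cong A_q$, and $V_p^{\circ}$ is $(H_p)^{\ast} \cong T^{\ast}_q Q$. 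This is the claimed splitting $T^{\ast}_p A^{\ast} \cong A_q \oplus T^{\ast}_q Q$, with the first summand vertical and the second horizontal.

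Next I would verify \eqref{e:splitting}, using linearity of $\mathrm{d}$ to treat $\Phi_X$ and $f \circ \pi$ separately. For the vertical part, a vertical vector $w \in V_p$ corresponds to some $\delta p \in A^{\ast}_q$; since $\mathrm{d}\pi(w) = 0$ the contribution of $f\circ\pi$ vanishes, and because $\Phi_X$ is fiberwise-linear its derivative in the direction $\delta p$ is $\langle \delta p, X(q) \rangle$. Under the identification $H_p^{\circ} \cong A_q$ this exhibits the vertical part of $\mathrm{d}(\Phi_X + f\circ\pi)(p)$ as $X(q)$, which is \eqref{e:splitting_ver}.

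For the horizontal part I would evaluate on the horizontal lift $v^{\mathrm{hor}}$ of $v \in T_q Q$. The term from $f\circ\pi$ contributes $\langle \mathrm{d} f(q), \mathrm{d}\pi(v^{\mathrm{hor}}) \rangle = v[f]$, i.e.\ $\mathrm{d} f(q)$. The term from $\Phi_X$ is the crux: choosing a $\nabla^{\ast}$-horizontal curve $c$ in $A^{\ast}$ through $p$ with $\dot c(0) = v^{\mathrm{hor}}$ and base curve $\gamma$, the compatibility of $\nabla^{\ast}$ with the duality pairing gives $\frac{\mathrm{d}}{\mathrm{d}t}\big|_{t=0} \langle c(t), X(\gamma(t)) \rangle = \langle \nabla^{\ast}_v c, X(q) \rangle + \langle p, \nabla_v X \rangle = \langle p, \nabla_v X \rangle$, since $\nabla^{\ast}_v c = 0$ along a horizontal curve. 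As $v^{\mathrm{hor}}[\Phi_X]$ equals this derivative, and $v$ was arbitrary, the horizontal part of $\mathrm{d}\Phi_X(p)$ is exactly $\langle p, \nabla X \rangle$ in the sense defined in the statement, yielding \eqref{e:splitting_hor}.

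Finally, to justify that \eqref{e:splitting} \emph{characterizes} the splitting, I would invoke \citep[Lemma 6.3.2]{Marle2008}: every covector in $T^{\ast}_p A^{\ast}$ has the form $\mathrm{d}(\Phi_X + f\circ\pi)(p)$, so prescribing the vertical and horizontal parts on this spanning family determines the splitting uniquely. The main obstacle I anticipate is not the single genuine computation—the horizontal derivative of $\Phi_X$ reduces cleanly to the defining property of $\nabla^{\ast}$ once a parallel representative curve is chosen—but rather the careful bookkeeping of the dualization conventions, ensuring that the vertical part of a covector lands in $A_q$ (and the horizontal part in $T^{\ast}_q Q$) compatibly with the canonical pairings, so that the right-hand sides of \eqref{e:splitting} are identified with the intended elements.
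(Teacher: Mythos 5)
Your proposal is correct and takes essentially the same route as the paper: both construct the horizontal distribution on $A^\ast$ from the dual connection $\nabla^\ast$, dualize the resulting splitting of $T_p A^\ast$ (your annihilator decomposition $H_p^{\circ} \oplus V_p^{\circ}$ agrees with the paper's dual lifts $V_p^\ast \oplus H_p^\ast$), and verify \eqref{e:splitting} by pairing $\mathrm{d}(\Phi_X + f \circ \pi)(p)$ against vertical and horizontal vectors, with the key horizontal computation reducing to the defining property of $\nabla^\ast$. The only cosmetic difference is that you realize the horizontal lift via a $\nabla^\ast$-parallel curve through $p$, whereas the paper uses the section-based formula $H_p v = T\mu(v) - V_p(\nabla^\ast_v \mu)$; these are equivalent descriptions of the same lift.
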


\begin{proof}
  Given $ p \in A _q ^\ast $, recall that the vertical lift
  $ V _p \colon A _q ^\ast \rightarrow T _p A ^\ast $ is defined by
  \begin{equation*}
    V _p r \coloneqq \frac{\mathrm{d}}{\mathrm{d}\epsilon} ( p + \epsilon r ) \Bigr\rvert _{ \epsilon = 0 } .
  \end{equation*}
  Associated to the dual connection $ \nabla ^\ast $ on $ A ^\ast $,
  the horizontal lift $ H _p \colon T _q Q \rightarrow T _p A ^\ast $
  is given by
  \begin{equation*}
    H _p v = T \mu (v) - V _p ( \nabla _v ^\ast \mu ) ,
  \end{equation*}
  where $\mu \in \Gamma ( A ^\ast ) $ is any section satisfying
  $ \mu (q) = p $. This is independent of the choice of section, and
  together the vertical and horizontal lifts define the splitting
  \begin{equation*}
    V _p \oplus H _p \colon A _q ^\ast \oplus T _q Q \xrightarrow{ \cong } T _p A ^\ast .
  \end{equation*}
  See, for instance, \citet[\S3.3]{Wendl2008} and
  \citet*[\S17.9]{KoMiSl1993}.  By taking the dual of these vertical
  and horizontal lifts, we obtain projections
  \begin{alignat*}{2}
    V _p ^\ast &\colon T _p ^\ast A ^\ast \rightarrow A _q , &\qquad \alpha &\mapsto \alpha ^{\mathrm{ver}} ,\\
    H _p ^\ast &\colon T _p ^\ast A ^\ast \rightarrow T _q ^\ast Q , &\qquad \alpha &\mapsto \alpha ^{\mathrm{hor}} ,
  \end{alignat*}
  and thus a dual splitting
  \begin{equation*}
    V _p ^\ast \oplus H _p ^\ast \colon T ^\ast _p A ^\ast \xrightarrow{ \cong } A _q \oplus T ^\ast _q Q .
  \end{equation*}
  It remains to show that $ \alpha ^{\mathrm{ver}} $ and
  $ \alpha ^{\mathrm{hor}} $ have the claimed expressions for
  $ \alpha = \mathrm{d} ( \Phi _X + f \circ \pi ) (p) $.

  First, observe that for all $ r \in A _q ^\ast $, we have
  \begin{equation*}
    \langle \mathrm{d} \Phi _X , V _p r \rangle = \frac{\mathrm{d}}{\mathrm{d}\epsilon} \langle p + \epsilon r , X \rangle \Bigr\rvert _{ \epsilon = 0 } = \langle r, X \rangle .
  \end{equation*}
  Furthermore,
  \begin{equation*}
    \bigl\langle \mathrm{d} ( f \circ \pi ) , V _p r \bigr\rangle = \frac{\mathrm{d}}{\mathrm{d} \epsilon} ( f \circ \pi ) ( p + \epsilon r ) \Bigr\rvert _{ \epsilon = 0 } = 0 ,
  \end{equation*}
  since $ \pi ( p + \epsilon r ) = q $ is constant in
  $\epsilon$. Together, these establish \eqref{e:splitting_ver}. Next,
  for all $ v \in T _q Q $,
  \begin{align*}
    \langle \mathrm{d} \Phi _X , H _p v \rangle
    &= \bigl\langle \mathrm{d} \Phi _X , T \mu (v) - V _p ( \nabla ^\ast _v \mu ) \bigr\rangle \\
    &= v \bigl[ \langle \mu, X \rangle \bigr] - \langle \nabla _v ^\ast \mu, X \rangle \\
    &= \langle \mu, \nabla _v X \rangle ,
  \end{align*}
  where the last equality is the defining property of the dual
  connection $ \nabla ^\ast $. Finally,
  \begin{align*}
    \bigl\langle \mathrm{d} ( f \circ \pi ) , H _p v \bigr\rangle
    &= \bigl\langle \mathrm{d} ( f \circ \pi ) , T \mu (v) - V _p ( \nabla _v ^\ast \mu ) \bigr\rangle \\
    &= \bigl\langle \mathrm{d} ( f \circ \pi \circ \mu ) , v \bigr\rangle - 0 ,\\
    &= \langle \mathrm{d} f, v \rangle .
  \end{align*}
  Together, these establish \eqref{e:splitting_hor}, which completes the proof.
\end{proof}

\subsection{Lie--Poisson structure}

We now use the splitting in \cref{lem:splitting} to express the
Lie--Poisson structure on $ A ^\ast $ in terms of the connection
$ \nabla $. The following result is not new; we first learned of it
from Rui Loja Fernandes, and it appears as an exercise in
\citet*[Exercise 13.73]{CrFeMu2021}. However, the suggested proof in
\citep{CrFeMu2021} compares with a local coordinate expression for the
Poisson tensor, and we believe the following coordinate-free proof is
new.

\begin{lemma}
  \label{lem:poisson_tensor}
  Let $ \nabla $ be a $ T Q $-connection on $A$, and let
  $ T \colon A \otimes A \rightarrow A $ be the torsion tensor of the
  associated $A$-connection.  The $ ( \pm ) $ Lie--Poisson tensor
  $ \Pi _{ \pm } \colon T ^\ast A ^\ast \otimes T ^\ast A ^\ast
  \rightarrow \mathbb{R} $ satisfies
  \begin{equation*}
    \Pi _{ \pm } ( \alpha , \beta ) = \pm \Bigl[ \bigl\langle  \beta ^{\mathrm{hor}} , \rho ( \alpha ^{\mathrm{ver}}) \bigr\rangle - \bigl\langle \alpha ^{\mathrm{hor}} , \rho (\beta ^{\mathrm{ver}}) \bigr\rangle - \bigl\langle p, T ( \alpha ^{\mathrm{ver}} , \beta ^{\mathrm{ver}} ) \bigr\rangle  \Bigr] ,
  \end{equation*}
  for all $ \alpha , \beta \in T ^\ast _p A ^\ast $.
\end{lemma}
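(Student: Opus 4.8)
The plan is to exploit the spanning property recalled just before the statement: every covector in $T^\ast_p A^\ast$ can be written as $\mathrm{d}(\Phi_X + f\circ\pi)(p)$ for some $X \in \Gamma(A)$ and $f \in C^\infty(Q)$. Since both sides of the claimed identity are bilinear in $(\alpha,\beta)$, it suffices to verify it when $\alpha = \mathrm{d}(\Phi_X + f\circ\pi)(p)$ and $\beta = \mathrm{d}(\Phi_Y + g\circ\pi)(p)$ for sections $X,Y \in \Gamma(A)$ and functions $f,g \in C^\infty(Q)$. For such covectors the left-hand side is, by definition of the Poisson tensor, the bracket $\{\Phi_X + f\circ\pi, \Phi_Y + g\circ\pi\}_\pm$ evaluated at $p$. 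This also settles well-definedness for free: the right-hand side depends on $\alpha,\beta$ only through their intrinsic horizontal and vertical components, so no check on the choice of representatives $X,f,Y,g$ is needed.

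First I would compute the left-hand side. Expanding the bracket by bilinearity and antisymmetry and applying the relations \eqref{e:algebroid_bracket_relations}, the $\Phi$--$\Phi$ term gives $\pm\Phi_{[X,Y]}$, the two cross terms give $\pm\rho(X)[g]\circ\pi$ and $\mp\rho(Y)[f]\circ\pi$, and the $\pi$--$\pi$ term vanishes. Evaluating at $p$ (so that $\Phi_{[X,Y]}(p) = \langle p, [X,Y]\rangle$ and $q = \pi(p)$) yields
\begin{equation*}
  \Pi_\pm(\alpha,\beta) = \pm\Bigl[ \langle p, [X,Y]\rangle + \rho(X)[g](q) - \rho(Y)[f](q) \Bigr].
\end{equation*}

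Next I would compute the right-hand side using \cref{lem:splitting}, which gives $\alpha^{\mathrm{ver}} = X(q)$, $\beta^{\mathrm{ver}} = Y(q)$, $\alpha^{\mathrm{hor}} = \langle p, \nabla X\rangle + \mathrm{d}f(q)$, and $\beta^{\mathrm{hor}} = \langle p, \nabla Y\rangle + \mathrm{d}g(q)$. Pairing $\beta^{\mathrm{hor}}$ with $\rho(\alpha^{\mathrm{ver}}) = \rho\bigl(X(q)\bigr)$ produces $\langle p, \nabla_{\rho(X)} Y\rangle + \rho(X)[g](q)$, and symmetrically the other term gives $\langle p, \nabla_{\rho(Y)} X\rangle + \rho(Y)[f](q)$. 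Expanding the torsion of the associated $A$-connection as $T\bigl(X(q),Y(q)\bigr) = \nabla_{\rho(X)} Y - \nabla_{\rho(Y)} X - [X,Y]$ evaluated at $q$, and pairing with $p$, I would then observe that the covariant-derivative contributions $\langle p, \nabla_{\rho(X)}Y\rangle$ and $\langle p, \nabla_{\rho(Y)}X\rangle$ cancel exactly against those arising from the two horizontal pairings, leaving precisely $\pm[\langle p, [X,Y]\rangle + \rho(X)[g](q) - \rho(Y)[f](q)]$, matching the left-hand side.

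The computation is essentially bookkeeping once the splitting of \cref{lem:splitting} is in hand, so I do not anticipate a serious obstacle; the one conceptual point worth flagging is \emph{why} the torsion appears. The horizontal components $\langle p, \nabla X\rangle$ are not tensorial in $X$ on their own, since they depend on $X$ through its covariant derivative, but these non-tensorial pieces are exactly what cancel. What survives of the antisymmetrized covariant derivatives, together with the bracket term $[X,Y]$, reassembles into the tensorial torsion $T$, and this is what forces the torsion term into the final formula. Keeping the $(\pm)$ signs consistent through the antisymmetrization is the only place where genuine care is required.
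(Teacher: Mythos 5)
Your proposal is correct and takes essentially the same route as the paper: both arguments rest on \cref{lem:splitting} together with the fact that covectors of the form $\mathrm{d} ( \Phi _X + f \circ \pi ) (p)$ span $T ^\ast _p A ^\ast$, and the cancellation of the $\nabla$-terms against the torsion-plus-bracket expansion is the identical computation. The only cosmetic difference is one of logical direction — you evaluate the known Lie--Poisson tensor on the spanning covectors and match it to the claimed formula, whereas the paper checks that the formula, viewed as a candidate bracket, satisfies the defining relations \eqref{e:algebroid_bracket_relations} — which are two phrasings of the same uniqueness argument.
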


\begin{proof}
  It suffices to show that
  $ \{ F, G \} _\pm = \Pi _\pm ( \mathrm{d} F , \mathrm{d} G ) $
  satisfies the bracket relations
  \eqref{e:algebroid_bracket_relations}. First, if
  $ \alpha = \mathrm{d} \Phi _X (p) $ and
  $ \beta = \mathrm{d} \Phi _Y (p) $, then \cref{lem:splitting} gives
  \begin{align*}
    \Pi _\pm \bigl( \mathrm{d} \Phi _X (p) , \mathrm{d} \Phi _Y (p) \bigr)
    &= \pm \Bigl[ \langle p, \nabla _X Y \rangle - \langle p, \nabla _Y X \rangle - \bigl\langle p , T ( X, Y) \bigr\rangle \Bigr] \\
    &= \pm \bigl\langle p, [X, Y] \bigr\rangle ,
  \end{align*}
  verifying \eqref{e:algebroid_bracket_XY}. Next, if
  $ \alpha = \mathrm{d} \Phi _X (p) $ and
  $ \beta = \mathrm{d} ( g \circ \pi ) (p) $, then the two terms
  involving $ \beta ^{\mathrm{ver}} = 0 $ vanish, leaving only
  \begin{align*}
    \Pi _\pm \bigl( \mathrm{d} \Phi _X (p) , \mathrm{d} (g \circ \pi) (p) \bigr)
    &= \pm \Bigl\langle \mathrm{d} g (q) , \rho \bigl( X (q) \bigr)  \Bigr\rangle \\
    &= \pm \rho (X) [g] (q) ,
  \end{align*}  
  verifying \eqref{e:algebroid_bracket_Xg}. Finally, if
  $ \alpha = \mathrm{d} ( f \circ \pi ) (p) $ and
  $ \beta = \mathrm{d} ( g \circ \pi ) (p) $, then all three terms
  vanish, since $ \alpha ^{\mathrm{ver}} = \beta ^{\mathrm{ver}} = 0 $. Therefore,
  \begin{equation*}
    \Pi _\pm \bigl( \mathrm{d} ( f \circ \pi ) (p) , \mathrm{d} ( g \circ \pi ) (p) \bigr) = 0 ,
  \end{equation*}
  verifying \eqref{e:algebroid_bracket_fg} and completing the proof.
\end{proof}

\subsection{Lie--Poisson equations} We are now ready to state our main
result, expressing the $ ( \pm ) $ Lie--Poisson equations on
$ A ^\ast $ relative to a given $ T Q $-connection $ \nabla $ on $A$.

\begin{theorem}
  \label{thm:lie-poisson_connection}
  Given $ H \in C ^\infty ( A ^\ast ) $, a path $p$ in $ A ^\ast $
  with base path $q$ is an integral curve of the Hamiltonian vector
  field, with respect to the $ ( \pm ) $ Lie--Poisson structure, if
  and only if
  \begin{subequations}
    \label{e:lie-poisson_connection}
    \begin{align}
      \dot{q} &= \rho (a) ,\label{e:lie-poisson_connection_hor}\\ \overline{ \nabla } ^\ast _a p &= \pm \rho ^\ast \bigl( \mathrm{d} H ^{\mathrm{hor}} (p) \bigr), \label{e:lie-poisson_connection_ver}
    \end{align}
  \end{subequations}
  where $ a = \mp \mathrm{d} H ^{\mathrm{ver}} (p) $. The first
  equation says that $a$ is an $A$-path, so it is valid to write
  $ \overline{ \nabla } _a ^\ast $.
\end{theorem}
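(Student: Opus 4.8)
The plan is to characterize integral curves of the Hamiltonian vector field $X_H$ through the splitting of $T_p A^\ast$ and $T^\ast_p A^\ast$ furnished by \cref{lem:splitting}, and then to read off the horizontal and vertical components using the Poisson tensor of \cref{lem:poisson_tensor}. Recall that $X_H$ is determined at each $p$ by $\langle \alpha, X_H \rangle = \Pi_\pm(\alpha, \mathrm{d}H(p))$ for all $\alpha \in T^\ast_p A^\ast$, so a path $p$ over $q$ is an integral curve exactly when $\dot p = X_H(p)$. Under the splitting $T_p A^\ast \cong A^\ast_q \oplus T_q Q$ dual to the one in \cref{lem:splitting}, the velocity decomposes as $\dot p = V_p\bigl((\dot p)^{\mathrm{ver}}\bigr) + H_p\bigl((\dot p)^{\mathrm{hor}}\bigr)$. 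First I would record the two geometric facts that drive the computation: applying $T\pi$ gives $(\dot p)^{\mathrm{hor}} = \dot q$, and the horizontal-lift formula $H_p v = T\mu(v) - V_p(\nabla^\ast_v \mu)$ identifies the vertical part as the covariant derivative $(\dot p)^{\mathrm{ver}} = \nabla^\ast_{\dot q} p$ along the base path, which coincides with the induced $A$-connection derivative $\nabla^\ast_a p$ once $a$ is known to be an $A$-path.

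Next I would extract the horizontal equation. Testing $\langle \alpha, X_H\rangle = \Pi_\pm(\alpha, \mathrm{d}H)$ against covectors with $\alpha^{\mathrm{ver}} = 0$ and arbitrary $\alpha^{\mathrm{hor}} \in T^\ast_q Q$, the formula of \cref{lem:poisson_tensor} collapses to $\langle \alpha^{\mathrm{hor}}, X_H^{\mathrm{hor}}\rangle = \mp\langle \alpha^{\mathrm{hor}}, \rho(\mathrm{d}H^{\mathrm{ver}}(p))\rangle$, whence $X_H^{\mathrm{hor}} = \rho(a)$ with $a = \mp\,\mathrm{d}H^{\mathrm{ver}}(p)$. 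Combined with $(\dot p)^{\mathrm{hor}} = \dot q$, this is precisely \eqref{e:lie-poisson_connection_hor} and shows that $a$ is an $A$-path, justifying the notation $\overline{\nabla}^\ast_a$.

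For the vertical equation I would test against covectors with $\alpha^{\mathrm{hor}} = 0$ and $\alpha^{\mathrm{ver}} = Z \in A_q$ arbitrary. \cref{lem:poisson_tensor} then yields $\langle Z, \nabla^\ast_{\dot q} p\rangle = \pm\langle \mathrm{d}H^{\mathrm{hor}}(p), \rho(Z)\rangle - \langle p, T(a, Z)\rangle$, after substituting $\mathrm{d}H^{\mathrm{ver}}(p) = \mp a$ and using antisymmetry of the torsion. The final step is to convert $\nabla^\ast$ into $\overline{\nabla}^\ast$: since $\overline{\nabla}_X Y = \nabla_X Y - T(X,Y)$, the dual connections satisfy $\langle \overline{\nabla}^\ast_a p, Z\rangle = \langle \nabla^\ast_a p, Z\rangle + \langle p, T(a, Z)\rangle$, so the two torsion contributions cancel and $\langle Z, \overline{\nabla}^\ast_a p\rangle = \pm\langle \mathrm{d}H^{\mathrm{hor}}(p), \rho(Z)\rangle = \pm\langle \rho^\ast\,\mathrm{d}H^{\mathrm{hor}}(p), Z\rangle$ for all $Z$, which is \eqref{e:lie-poisson_connection_ver}. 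Because the splitting is an isomorphism, matching both components is equivalent to $\dot p = X_H$, so the two equations together are equivalent to the integral-curve condition, delivering the claimed ``if and only if'' in a single stroke.

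I expect the main obstacle to be the bookkeeping around the two covariant derivatives: verifying cleanly that $(\dot p)^{\mathrm{ver}}$ is the $\nabla^\ast$-derivative along $q$, and that passing from $\nabla^\ast$ to $\overline{\nabla}^\ast$ exactly cancels the torsion term produced by \cref{lem:poisson_tensor}. This cancellation is the whole reason the barred connection, rather than $\nabla$ itself, governs the vertical dynamics, so it warrants careful sign-tracking through the $\pm/\mp$ conventions and the antisymmetry of $T$.
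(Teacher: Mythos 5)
Your proposal is correct and takes essentially the same approach as the paper: both reduce the integral-curve condition to componentwise tests built on \cref{lem:splitting,lem:poisson_tensor}, with the torsion term exactly absorbed by passing from $\nabla$ to $\overline{\nabla}$, and your sign-tracking through the $\pm/\mp$ conventions checks out. The only difference is organizational: you test $\dot{p} = X_H$ against purely vertical and purely horizontal covectors, using the standard identification $(\dot{p})^{\mathrm{ver}} = \nabla^\ast_{\dot{q}}\, p$ of the vertical part of the velocity with the covariant derivative along the base path, whereas the paper equivalently tests $\frac{\mathrm{d}}{\mathrm{d}t} F(p) = \{ F, H \}_\pm (p)$ on the generating functions $F = \Phi_X$ and $F = f \circ \pi$ (computing $\frac{\mathrm{d}}{\mathrm{d}t} \Phi_X (p)$ via a time-dependent section), so that the torsion conversion is carried out on the section side (replacing $\nabla_{\mathrm{d}H^{\mathrm{ver}}} X$ by $\overline{\nabla}_{\mathrm{d}H^{\mathrm{ver}}} X$) rather than on the dual-connection side as you do.
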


\begin{proof}
  By definition, $p$ is an integral curve of $H$ if and only if
  $ \frac{\mathrm{d}}{\mathrm{d}t} F (p) = \{ F, H \} _\pm (p) $ for
  all $ F \in C ^\infty ( A ^\ast ) $. It suffices to consider
  $ F = f \circ \pi $ for $ f \in C ^\infty (Q) $ and $ F = \Phi _X $
  for $ X \in \Gamma (A) $.

  First, by the chain rule,
  \begin{equation*}
    \frac{\mathrm{d}}{\mathrm{d}t} ( f \circ \pi ) (p) = \bigl\langle \mathrm{d} f (q) , \dot{q} \bigr\rangle ,
  \end{equation*}
  while \cref{lem:splitting,lem:poisson_tensor} imply
  \begin{equation*}
    \{ f \circ \pi , H \} _\pm (p) = \mp \Bigl\langle \mathrm{d} f (q), \rho \bigl( \mathrm{d} H ^{\mathrm{ver}} (p) \bigr) \Bigr\rangle = \bigl\langle \mathrm{d} f (q), \rho (a) \bigr\rangle .
  \end{equation*}
  These are equal for all $ f \in C ^\infty (Q) $ if and only if
  \eqref{e:lie-poisson_connection_hor} holds, i.e.,
  $a = \mp \mathrm{d} H ^{\mathrm{ver}} (p) $ is an $A$-path.

  Next, if $\mu$ is a time-dependent section of $ A ^\ast $ such that
  $ \mu \bigl( t , q (t) \bigr) = p (t) $, then
  \begin{align*}
    \frac{\mathrm{d}}{\mathrm{d}t} \Phi _X (p)
    &= \frac{\mathrm{d}}{\mathrm{d}t} \langle \mu , X \rangle (q) \\
    &= \langle \partial _t \mu , X \rangle + \rho (a)  \bigl[ \langle \mu , X \rangle \bigr] \\
    &= \langle \partial _t \mu + \overline{ \nabla } ^\ast _a \mu , X \rangle + \langle \mu , \overline{ \nabla } _a X \rangle \\
    &= \langle \overline{ \nabla } _a ^\ast p , X \rangle + \langle p, \overline{ \nabla } _a X \rangle ,
  \end{align*}
  while \cref{lem:splitting,lem:poisson_tensor} imply
  \begin{align*}
    \{ \Phi _X , H \} _\pm (p) 
    &= \pm \biggl[ \bigl\langle \mathrm{d} H ^{\mathrm{hor}} (p) , \rho (X) \bigr\rangle - \langle p, \nabla _{ \mathrm{d} H ^{\mathrm{ver}} (p) } X \rangle - \Bigl\langle p , T \bigl(  X, \mathrm{d} H ^{\mathrm{ver}} (p) \bigr) \Bigr\rangle \biggr]\\
    &= \pm \Bigl[ \bigl\langle \mathrm{d} H ^{\mathrm{hor}} (p) , \rho (X) \bigr\rangle - \langle p , \overline{ \nabla } _{ \mathrm{d} H ^{\mathrm{ver}} (p) } X \rangle \Bigr] \\
    &= \pm \Bigl\langle \rho ^\ast \bigl( \mathrm{d} H ^{\mathrm{hor}} (p) \bigr) , X \Bigr\rangle + \langle p, \overline{ \nabla } _a X \rangle ,
  \end{align*}
  using the fact that torsion is the difference between $ \nabla $ and
  $ \overline{ \nabla } $. These two expressions are equal for all
  $ X \in \Gamma (A) $ if and only if
  \eqref{e:lie-poisson_connection_ver} holds.
\end{proof}

We now relate the coordinate-free form of the Lie--Poisson equations
in \cref{thm:lie-poisson_connection} to the local-coordinate
formulation \eqref{e:lie-poisson_coordinates}. Given local coordinates
$ q ^i $ for $Q$ and a local basis of sections $ \{ e _I \} $ of $A$,
choose the locally trivial $ T Q $-connection defined by
$ \nabla _{ \partial / \partial q ^i } e _J = 0 $. With respect to
this connection, the vertical and horizontal components of
$ \mathrm{d} H $ are given by
\begin{equation*}
  \mathrm{d} H ^{\mathrm{ver}} = \frac{ \partial H }{ \partial \mu _I } e _I , \qquad \mathrm{d} H ^{\mathrm{hor}} = \frac{ \partial H }{ \partial q ^i } \,\mathrm{d}q ^i .
\end{equation*}
It follows that $ a = \mp \mathrm{d} H ^{\mathrm{ver}} (p) $ is an
$A$-path if and only if
\begin{equation*}
  \dot{q} ^i = \mp \rho ^i _I \frac{ \partial H }{ \partial \mu _I } ,
\end{equation*}
which is \eqref{e:lie-poisson_coordinates_q}. Next, if $\mu$ is a
time-dependent section of $ A ^\ast $ such that
$ \mu \bigl( t, q (t) \bigr) = p (t) $, then
\begin{equation*}
  \langle \overline{ \nabla } _a ^\ast p , e _I \rangle = \langle \partial _t \mu + \overline{ \nabla } _a ^\ast \mu , e _I \rangle = \dot{ \mu } _I \mp C _{ I J } ^K \frac{ \partial H }{ \partial \mu _J } \mu _K .
\end{equation*}
Setting this equal to
$ \pm \bigl\langle \mathrm{d} H ^{\mathrm{hor}} (p), \rho ( e _I )
\bigr\rangle $ gives
\begin{equation*}
  \dot{ \mu } _I \mp C _{ I J } ^K \frac{ \partial H }{ \partial \mu _J } \mu _K = \pm \rho ^i _I\frac{ \partial H }{ \partial q ^i } ,
\end{equation*}
which rearranges to \eqref{e:lie-poisson_coordinates_mu}.

\begin{remark}
  \label{rmk:christoffel}
  To illustrate that these equations are independent of the
  $ T Q $-connection chosen, suppose more generally that
  $ \nabla _{ \partial / \partial q ^i } e _J = \Gamma _{ i J } ^K e
  _K $, where $ \Gamma _{ i J } ^K $ are Christoffel symbols. Then
  \begin{equation*}
    \mathrm{d} H ^{\mathrm{ver}} = \frac{ \partial H }{ \partial \mu _I } e _I , \qquad \mathrm{d} H ^{\mathrm{hor}} = \biggl( \frac{ \partial H }{ \partial q ^i } + \Gamma _{ i J } ^K \frac{ \partial H }{ \partial \mu _J } \mu _K \biggr) \,\mathrm{d} q ^i .
  \end{equation*}
  Since $ \mathrm{d} H ^{\mathrm{ver}} $ is the same as above, once
  again \eqref{e:lie-poisson_connection_hor} becomes
  \eqref{e:lie-poisson_coordinates_q}. Meanwhile,
  \eqref{e:lie-poisson_connection_ver} becomes
  \begin{equation*}
    \dot{ \mu } _I \pm ( \rho _I ^i \Gamma _{ i J } ^K - C _{ I J } ^K ) \frac{ \partial H }{ \partial \mu _J } \mu _K = \pm \rho ^i _I \biggl( \frac{ \partial H }{ \partial q ^i } + \Gamma _{ i J } ^K \frac{ \partial H }{ \partial \mu _J } \mu _K \biggr) ,
  \end{equation*}
  and the connection-dependent terms cancel to give
  \eqref{e:lie-poisson_coordinates_mu}.
\end{remark}

\subsection{Examples}

We now illustrate the results of this section by showing how they give
the canonical structures and dynamics on Lie algebra duals and
cotangent bundles.

\begin{example}
  If $ A = \mathfrak{g} \rightarrow \bullet $ is a Lie algebra and
  $ \nabla $ is the trivial $ T \bullet $-connection, then we have the
  associated $A$-connections $ \nabla = 0 $,
  $ \overline{ \nabla } = \operatorname{ad} $, and
  $ \overline{ \nabla } ^\ast = -\operatorname{ad} ^\ast $.

  Given $ F \in C ^\infty ( \mathfrak{g} ^\ast ) $, we have
  $ \mathrm{d} F ^{\mathrm{ver}} = \delta F / \delta \mu $ and
  $ \mathrm{d} F ^{\mathrm{hor}} = 0 $, and likewise for
  $ G \in C ^\infty ( \mathfrak{g} ^\ast ) $. Since the horizontal
  components vanish and $ T ( \cdot , \cdot ) = - [ \cdot , \cdot ] $,
  applying \cref{lem:poisson_tensor} gives
  \begin{equation*}
    \{ F, G \} _\pm (\mu) = \mp \Bigl\langle \mu , T \bigl( \mathrm{d} F ^{\mathrm{ver}} (\mu) , \mathrm{d} G ^{\mathrm{ver}} (\mu) \bigr) \Bigr\rangle = \pm \Biggl\langle \mu , \biggl[ \frac{ \delta F }{ \delta \mu } , \frac{ \delta G }{ \delta \mu } \biggr]\Biggr\rangle,
  \end{equation*}
  recovering the $ ( \pm ) $ Lie--Poisson brackets on
  $ \mathfrak{g}^{\ast} $ from \eqref{e:algebra_bracket}.

  Applying \cref{thm:lie-poisson_connection} to
  $ H \in C ^\infty ( \mathfrak{g} ^\ast ) $, the horizontal
  components vanish, and we have $ a = \mp \delta H / \delta \mu
  $. Identifying $p$ with the time-dependent section
  $ \mu ( t, \bullet ) = p (t) $, we get
  \begin{equation*}
    \dot{ \mu } \pm \operatorname{ad} ^\ast _{ \delta H / \delta \mu } \mu = \dot{ \mu } + \overline{ \nabla } ^\ast _a \mu = \overline{ \nabla } _a ^\ast p = 0 ,
  \end{equation*}
  so \eqref{e:lie-poisson_connection_hor} is trivial and
  \eqref{e:lie-poisson_connection_ver} is equivalent to the
  Lie--Poisson equations \eqref{e:lie-poisson} on
  $\mathfrak{g}^{\ast}$.
\end{example}

\begin{example}
  Suppose $ A = T Q \rightarrow Q $ is a tangent bundle. As in
  \cref{ex:tq_poisson}, let $ q ^i $ be local coordinates on $Q$,
  $ \partial / \partial q ^i $ be the corresponding local basis of
  sections of $ T Q $, and $ q ^i , p _i $ be canonical coordinates
  for $ T ^\ast Q $. Take the locally trivial connection
  $ \nabla _{ \partial / \partial q ^i } ( \partial / \partial q ^j )
  = 0 $. (This may not be globally trivial, but \cref{rmk:christoffel}
  shows that there is no loss of generality.) Since
  $ [ \partial / \partial q ^i , \partial / \partial q ^j ] = 0 $, it
  follows that $ \overline{ \nabla } $ and
  $ \overline{ \nabla } ^\ast $ are also trivial in local coordinates.

  Given $ F \in C ^\infty (T ^\ast Q ) $, the vertical and horizontal
  components of $ \mathrm{d} F $ are
  \begin{equation*}
    \mathrm{d} F ^{\mathrm{ver}} = \frac{ \partial F }{ \partial p _i } \frac{ \partial }{ \partial q ^i } , \qquad \mathrm{d} F ^{\mathrm{hor}} = \frac{ \partial F }{ \partial q ^i } \,\mathrm{d}q ^i ,
  \end{equation*}
  and likewise for $ G \in C ^\infty ( T ^\ast Q ) $. Since $\rho$ is
  the identity on $ T Q $ and $ \nabla $ is torsion-free,
  \cref{lem:poisson_tensor} gives
  \begin{equation*}
    \{ F, G \} _\pm
    = \pm \bigl[ \langle \mathrm{d} G ^{\mathrm{hor}} , \mathrm{d} F ^{\mathrm{ver}} \rangle - \langle \mathrm{d} F ^{\mathrm{hor}} , \mathrm{d} G ^{\mathrm{ver}} \rangle \bigr]
    = \pm \biggl( \frac{ \partial F }{ \partial p _i } \frac{ \partial G }{ \partial q ^i } - \frac{ \partial F }{ \partial q ^i } \frac{ \partial G }{ \partial p _i } \biggr),
  \end{equation*}
  recovering the previous expression \eqref{e:cotangent_bracket} for
  the $( \pm ) $ brackets. Again, we note that the $ ( - ) $ bracket
  agrees with the commonly-used sign convention for the canonical
  Poisson structure on a cotangent bundle, e.g., as in
  \citet{MaRa1999}.

  Applying \cref{thm:lie-poisson_connection} to
  $ H \in C ^\infty ( T ^\ast Q ) $, we have
  \begin{equation*}
    a = \mp \frac{ \partial H }{ \partial p _i } \frac{ \partial }{ \partial q ^i } .
  \end{equation*}
  Since $\rho$ is the identity and $ \overline{ \nabla } ^\ast $ is
  trivial in local coordinates, \eqref{e:lie-poisson_connection} gives
  \begin{align*}
    \dot{q} ^i &= \mp \frac{ \partial H }{ \partial p _i } ,\\
    \dot{p} _i &= \pm \frac{ \partial H }{ \partial q ^i } ,
  \end{align*}
  recovering the $ ( \pm ) $ Hamilton's equations
  \eqref{e:cotangent_equations}.
\end{example}

We give one more example, on the cotangent bundle, where we apply
\cref{thm:lie-poisson_connection} to a Hamiltonian arising from a
metric on $Q$, obtaining coordinate-free dynamics in terms of the
Levi-Civita connection.

\begin{example}
  \label{ex:geodesic}
  Let $g \colon T Q \otimes T Q \rightarrow \mathbb{R} $ be a
  (pseudo-)Riemannian metric on $Q$. Let
  $ g ^\flat \colon T Q \rightarrow T ^\ast Q $ denote the bundle map
  $ v \mapsto g ( v, \cdot ) $ and
  $ g ^\sharp \coloneqq ( g ^\flat ) ^{-1} \colon T ^\ast Q
  \rightarrow T Q $. Now, suppose we have a Hamiltonian in the form
  \begin{equation}
    \label{e:kinetic-plus-potential}
    H ( q, p ) = \frac{1}{2} \bigl\langle p, g ^\sharp (p) \bigr\rangle + U ( q ) ,
  \end{equation}
  where $ U \in C ^\infty (Q) $ is a potential energy function.

  Equip $ T ^\ast Q $ with the canonical $ ( - ) $ Poisson structure,
  and let $ \nabla $ be the Levi-Civita connection on $Q$. It follows that
  \begin{equation*}
    \mathrm{d} H ^{\mathrm{ver}} (q, p) = g ^\sharp (p) , \qquad \mathrm{d} H ^{\mathrm{hor}} (q, p ) = \mathrm{d} U ( q ) ,
  \end{equation*}
  where the kinetic-energy part of the Hamiltonian does not contribute
  to the horizontal part due to the metric-compatibility of
  $ \nabla $. Furthermore, since $ \nabla $ is torsion-free, we have
  $ \nabla = \overline{ \nabla } $. Since $\rho$ is the identity and
  $ a = g ^\sharp (p) $, we see that \eqref{e:lie-poisson_connection} gives the equations
  \begin{align*}
    \dot{q} &= g ^\sharp (p) ,\\
    \nabla _{ \dot{q} } ^\ast p &= -\mathrm{d} U (q) .
  \end{align*}
  In particular, the case $ U = 0 $ corresponds to geodesic flow. Note that applying $ g ^\sharp $ to both sides of the second equation, and substituting the first, gives the second-order dynamics
  \begin{equation*}
    \nabla _{ \dot{q} } \dot{q} = - \operatorname{grad} U (q) .
  \end{equation*}
\end{example}

\subsection{Systems arising from Lie algebroid metrics}

We now generalize \cref{ex:geodesic} to the case where $A$ is an
arbitrary Lie algebroid over $Q$, equipped with a bundle metric
$ g \colon A \otimes A \rightarrow \mathbb{R} $, and
\eqref{e:kinetic-plus-potential} is a kinetic-plus-potential
Hamiltonian on $ A ^\ast $. Our approach extends that of
\citet{Martinez2001} for the case of action algebroids; see also
\citet{CoMa2004} for a related approach in the context of Lagrangian
control systems.

Some caution is required: while there is a suitable notion of
Levi-Civita (metric-compatible, torsion-free) $A$-connection
\citep{CoMa2004,CoLeMaMaMa2006,GrUrGr2006}, this $A$-connection
generally \emph{does not} arise from a $ T Q $-connection $ \nabla
$. For example, when $ A = \mathfrak{g} \rightarrow \bullet $, the
trivial connection is the unique $ T \bullet $-connection, but its
associated $A$-connection has torsion $ - [\cdot , \cdot ]$, which is
nonvanishing unless $\mathfrak{g}$ is abelian. Instead, since $g$ is a
bundle metric, we may only assume the existence (but not necessarily
uniqueness) of a metric-compatible $ T Q $-connection $ \nabla $,
whose associated $A$-connection may have nonvanishing torsion. (See
the remark following Proposition~III.1.5 in \citet{KoNo1963} on the
existence of bundle-metric-compatible connections.)

Metric-compatibility of $ \nabla $ gives the same splitting of
$ \mathrm{d} H $ into vertical and horizontal parts as in
\cref{ex:geodesic}. Taking the $ ( - ) $ Lie--Poisson structure, we
therefore have $ a = g ^\sharp (p) $ and
\begin{subequations}
  \label{e:algebroid_newton}
  \begin{align}
    \dot{q} &= \rho (a) , \label{e:algebroid_newton_q}\\
    \overline{ \nabla } _a ^\ast p &= - \rho ^\ast \bigl( \mathrm{d} U ( q ) \bigr) \label{e:algebroid_newton_p}.
  \end{align}
\end{subequations}
Here, $ \overline{ \nabla } $ generally differs from $ \nabla $ and
need not be metric-compatible, so applying $ g ^\sharp $ to
\eqref{e:algebroid_newton_p} is not as simple as before. We first
define an additional $A$-connection
\begin{equation*}
  \overline{ \nabla } ^\dagger _X Y \coloneqq g ^\sharp \bigl( \nabla ^\ast _X g ^\flat ( Y ) \bigr) , \qquad X, Y \in \Gamma (A) ,
\end{equation*}
which is directly verified to satisfy \cref{def:a-connection}.  Next,
as in \citet{CoMa2004}, we define the gradient of $U$ with respect to
the Lie algebroid metric to be the section
\begin{equation*}
  \operatorname{grad} U \coloneqq g ^\sharp \circ \rho ^\ast \circ \mathrm{d} U \in \Gamma (A) .
\end{equation*}
Therefore, applying $ g ^\sharp $ to both sides of
\eqref{e:algebroid_newton_p} gives
\begin{equation}
  \label{e:algebroid_newton_a}
  \overline{ \nabla } _a ^\dagger a = - \operatorname{grad} U (q) ,
\end{equation}
and the case $ U = 0 $ gives geodesic flow with respect to the
$A$-connection $ \overline{ \nabla } ^\dagger $.

Although $ \overline{ \nabla } ^\dagger $ is generally distinct from
the Levi-Civita $A$-connection $ \nabla ^g $, we next show that in
fact $ \overline{ \nabla } ^\dagger _a a = \nabla ^g _a a $. Thus
$ \overline{ \nabla } ^\dagger $ and $ \nabla ^g $ are interchangeable
in \eqref{e:algebroid_newton_a}, and in particular they have the same
geodesic flow.

\begin{lemma}
  \label{lem:levi-civita}
  Let $A$ be a Lie algebroid over $Q$ equipped with a bundle metric
  $g$. For any bundle-metric-compatible $ T Q $-connection $ \nabla $,
  the Levi-Civita connection $ \nabla ^g $ satisfies
  \begin{equation*}
    \nabla ^g _X Y = \frac{1}{2} \bigl( \overline{ \nabla } ^\dagger _X Y + \overline{ \nabla } ^\dagger _Y X + [ X, Y ] \bigr) ,
  \end{equation*}
  for all $ X, Y \in \Gamma (A) $. That is,
  $ \nabla ^g = \frac{1}{2} \bigl( \overline{ \nabla } ^\dagger +
  \overline{ \overline{ \nabla } ^\dagger } \bigr) $.
\end{lemma}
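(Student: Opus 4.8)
The plan is to use the fact that $\nabla^g$ is the \emph{unique} metric-compatible, torsion-free $A$-connection on $A$, and to verify that the symmetrized connection $\widetilde{\nabla} \coloneqq \tfrac{1}{2}\bigl(\overline{\nabla}^\dagger + \overline{\overline{\nabla}^\dagger}\bigr)$, whose action is $\widetilde{\nabla}_X Y = \tfrac{1}{2}\bigl(\overline{\nabla}^\dagger_X Y + \overline{\nabla}^\dagger_Y X + [X,Y]\bigr)$, possesses both properties. Torsion-freeness costs nothing: for any $A$-connection $D$ on $A$ the torsions of $D$ and $\overline{D}$ are opposite, since $\overline{D}_X Y - \overline{D}_Y X - [X,Y] = -\bigl(D_X Y - D_Y X - [X,Y]\bigr)$, so $\tfrac{1}{2}(D + \overline{D})$ is always torsion-free; taking $D = \overline{\nabla}^\dagger$ disposes of one of the two defining conditions immediately. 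The entire content therefore lies in metric-compatibility, and I would package this as verifying the Koszul identity $2 g(\widetilde{\nabla}_X Y, Z) = \rho(X)[g(Y,Z)] + \rho(Y)[g(Z,X)] - \rho(Z)[g(X,Y)] + g([X,Y],Z) - g([X,Z],Y) - g([Y,Z],X)$, which (given torsion-freeness, and with $\rho$ and the $A$-bracket replacing the usual vector-field action and Lie bracket) is equivalent to metric-compatibility and, by nondegeneracy of $g$, pins $\nabla^g$ down uniquely.

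The first real step is to extract from the definition of $\overline{\nabla}^\dagger$ the single identity that drives everything. Unwinding $\overline{\nabla}^\dagger_X Y = g^\sharp\bigl(\overline{\nabla}^\ast_X g^\flat(Y)\bigr)$ with the defining property of the dual connection gives $g(\overline{\nabla}^\dagger_X Y, Z) = \bigl\langle \overline{\nabla}^\ast_X g^\flat(Y), Z\bigr\rangle = \rho(X)\bigl[g(Y,Z)\bigr] - g(Y, \overline{\nabla}_X Z)$; that is, $\overline{\nabla}^\dagger$ is precisely the metric adjoint of the induced connection $\overline{\nabla}$. I would record this as the relation $\rho(X)[g(Y,Z)] = g(\overline{\nabla}^\dagger_X Y, Z) + g(Y, \overline{\nabla}_X Z)$ and use it twice below.

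The main computation then writes $2g(\widetilde{\nabla}_X Y, Z) = g(\overline{\nabla}^\dagger_X Y, Z) + g(\overline{\nabla}^\dagger_Y X, Z) + g([X,Y],Z)$, substitutes the adjoint relation into the first two terms (once as written, once with $X$ and $Y$ interchanged), and expands $\overline{\nabla}_X Z = \nabla_{\rho(Z)} X + [X,Z]$ and $\overline{\nabla}_Y Z = \nabla_{\rho(Z)} Y + [Y,Z]$ from the definition of the induced connection. The crucial collapse is that the two $TQ$-connection terms recombine by metric-compatibility of $\nabla$ evaluated along $\rho(Z)$: $g(Y, \nabla_{\rho(Z)} X) + g(X, \nabla_{\rho(Z)} Y) = \rho(Z)[g(X,Y)]$. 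After this, the remaining bracket contributions reorganize into $g([X,Z],Y) + g([Y,Z],X)$, and the whole expression matches the Koszul right-hand side term by term, giving $\widetilde{\nabla} = \nabla^g$.

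I expect the main obstacle to be conceptual rather than computational: one must resist treating $\overline{\nabla}$ or $\overline{\nabla}^\dagger$ as metric-compatible or torsion-free in isolation (neither is, as the Lie-algebra case $\overline{\nabla} = \operatorname{ad}$ already shows), and instead use only the single adjoint relation together with metric-compatibility of the underlying $TQ$-connection $\nabla$. The one point demanding genuine care is that the dual appearing in $\overline{\nabla}^\dagger$ is that of the \emph{induced} connection $\overline{\nabla}$ (as used for the momentum equation \eqref{e:algebroid_newton_p}); this is exactly what makes the $\nabla_{\rho(Z)}$ terms symmetric in $X$ and $Y$ and hence collapsible, whereas pairing against $\nabla$ would leave terms that do not recombine into the Koszul formula. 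As a sanity check I would specialize to a Lie algebra, where the formula must reduce to the classical $2g(\nabla^g_\xi\eta,\zeta) = g([\xi,\eta],\zeta) - g([\xi,\zeta],\eta) - g([\eta,\zeta],\xi)$.
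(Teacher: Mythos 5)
Your proof is correct and takes essentially the same route as the paper's: both hinge on the Koszul-type formula for $\nabla^g$ (cited from Cort\'es--Mart\'inez) together with the adjoint relation $\rho(X)\bigl[g(Y,Z)\bigr] = g(\overline{\nabla}^\dagger_X Y, Z) + g(Y,\overline{\nabla}_X Z)$, expanded via $\overline{\nabla}_X Z = \nabla_{\rho(Z)}X + [X,Z]$ and collapsed using metric-compatibility of $\nabla$ in the $\rho(Z)$ term. The only differences are cosmetic: you run the computation from the candidate connection toward the Koszul identity rather than the reverse, and your preliminary torsion-freeness observation, while correct, is not needed once the Koszul identity (which determines the connection by nondegeneracy of $g$) is verified.
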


\begin{proof}
  From \citep[Proposition 2.6]{CoMa2004}, the Levi-Civita connection
  is determined by
  \begin{alignat*}{4}
    2 g ( \nabla ^g _X Y, Z )
    &={}& \rho (X) \bigl[ g ( Y, Z ) \bigr] &{}+{}& \rho (Y) \bigl[ g ( X, Z ) \bigr] &{}-{}& \rho (Z) \bigl[ g ( X, Y ) \bigr] \\
    &&{}-  g \bigl( X, [Y,Z] \bigr) &{}-{}& g \bigl( Y, [X,Z] \bigr) &{}+{}& g \bigl( Z, [X,Y] \bigr) ,
  \end{alignat*} 
  for all $ X, Y, Z \in \Gamma (A) $, which is a generalization of the
  usual tangent bundle formula. Now, by the definitions of
  $ \overline{ \nabla } ^\dagger $, the dual connection
  $ \overline{ \nabla } ^\ast $, and $ \overline{ \nabla } $ itself,
  we have
  \begin{align*}
    \rho (X) \bigl[ g ( Y,Z ) \bigr]
    &= g ( \overline{ \nabla }^\dagger _X Y , Z ) + g ( Y, \overline{ \nabla } _X Z ) \\
    &= g ( \overline{ \nabla }^\dagger _X Y , Z ) + g ( Y, \nabla _Z X + [X,Z] ),
  \end{align*} 
  and likewise,
  \begin{equation*}
    \rho (Y) \bigl[ g ( X,Z ) \bigr]
    = g ( \overline{ \nabla }^\dagger _Y X , Z ) + g ( X, \nabla _Z Y + [Y,Z] )   \end{equation*}
  On the other hand, metric-compatibility of $ \nabla $ gives
  \begin{equation*}
    \rho (Z) \bigl[ g (X,Y) \bigr] = g ( \nabla _Z X , Y ) + g ( X , \nabla _Z Y ) .
  \end{equation*}
  Substituting these into the formula for $ \nabla ^g $ and canceling
  terms, we obtain
  \begin{equation*}
    2 g ( \nabla ^g _X Y , Z ) = g \bigl( \overline{ \nabla } ^\dagger _X Y + \overline{ \nabla } ^\dagger _Y X + [X,Y], Z \bigr) ,
  \end{equation*}
  which completes the proof.
\end{proof}

\begin{corollary}
  For all $ X \in \Gamma (A) $, we have
  $ \nabla ^g _X X = \overline{ \nabla } ^\dagger _X X $. In
  particular, an $A$-path $a$ satisfies \eqref{e:algebroid_newton_a}
  if and only if
  \begin{equation*}
    \nabla ^g _a a = - \operatorname{grad} U (q) .
  \end{equation*}
\end{corollary}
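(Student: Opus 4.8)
The plan is to read off the first identity directly from \cref{lem:levi-civita} and then upgrade it from sections to $A$-paths. For the first claim, I would set $ Y = X $ in the formula of \cref{lem:levi-civita}. Since the Lie bracket is antisymmetric, $ [X,X] = 0 $, and the two symmetric terms coincide, so that
\begin{equation*}
  \nabla ^g _X X = \frac{1}{2} \bigl( \overline{ \nabla } ^\dagger _X X + \overline{ \nabla } ^\dagger _X X + [X,X] \bigr) = \overline{ \nabla } ^\dagger _X X ,
\end{equation*}
for every $ X \in \Gamma (A) $. This is the entire content of the pointwise statement; geometrically, it reflects the fact that the two $A$-connections differ only by a tensor that is skew in its two arguments, which therefore vanishes on the diagonal.

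For the second claim, recall that \eqref{e:algebroid_newton_a} reads $ \overline{ \nabla } ^\dagger _a a = - \operatorname{grad} U (q) $, so it suffices to show that $ \nabla ^g _a a = \overline{ \nabla } ^\dagger _a a $ for any $A$-path $a$. Here I would invoke the definition of the covariant derivative along a path: choosing a time-dependent section $\xi$ with $ \xi \bigl( t, q (t) \bigr) = a (t) $, both $ \nabla ^g _a a $ and $ \overline{ \nabla } ^\dagger _a a $ are computed as the common term $ \partial _t \xi \bigl( t, q (t) \bigr) $ plus the section-wise connection term $ \nabla ^g _\xi \xi $ or $ \overline{ \nabla } ^\dagger _\xi \xi $, respectively, evaluated at $ \bigl( t, q (t) \bigr) $. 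The $ \partial _t $ contributions are identical because both connections differentiate the \emph{same} section $a$, and for each fixed $t$ the first claim gives $ \nabla ^g _{ \xi ( t, \cdot ) } \xi ( t, \cdot ) = \overline{ \nabla } ^\dagger _{ \xi ( t, \cdot ) } \xi ( t, \cdot ) $. Hence the two path derivatives agree along $q$, and \eqref{e:algebroid_newton_a} is equivalent to $ \nabla ^g _a a = - \operatorname{grad} U (q) $.

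I expect the only point requiring any care to be this passage from the tensorial identity on sections to the statement for $A$-paths. The observation that makes it routine is that the argument $a$ appears in both slots of each connection, so the time-derivative terms in the path-derivative formula coincide exactly and the problem collapses to the already-established diagonal equality; no further cancellation beyond $ [X,X] = 0 $ and the metric-compatibility packaged into \cref{lem:levi-civita} is needed.
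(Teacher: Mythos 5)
Your proof is correct and matches the paper's (implicit) argument: the corollary is meant to follow immediately from \cref{lem:levi-civita} by setting $Y = X$ and using $[X,X] = 0$, exactly as you do. Your additional verification that the diagonal identity for sections transfers to covariant derivatives along $A$-paths (via a time-dependent section, with the $\partial_t$ terms common to both connections) is a detail the paper leaves unstated, and it is handled correctly.
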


\begin{example}
  When $ \nabla $ is the Levi-Civita connection on
  $ A = T Q \rightarrow Q $, we have
  $ \overline{ \nabla } ^\dagger = \overline{ \nabla } = \nabla $ and
  thus recover the equations obtained in \cref{ex:geodesic}.
\end{example}

\begin{example}
  The special case where $A$ is an action algebroid recovers the
  equations of \citet{Martinez2001}, as we now show. An
  (infinitesimal) action of a Lie algebra $\mathfrak{g}$ on $Q$ is a
  Lie algebra homomorphism
  $ \mathfrak{g} \rightarrow \mathfrak{X} (Q) $,
  $ \xi \mapsto \xi _Q $. The associated \emph{action algebroid} is
  the trivial bundle $ A = Q \times \mathfrak{g} \rightarrow Q $,
  where $ \rho ( q, \xi ) = \xi _Q (q) $. The Lie bracket on
  $ \Gamma (A) $ is determined by requiring that it agree with the
  bracket of $\mathfrak{g}$ on constant sections, where
  $ \xi \in \mathfrak{g} $ is identified with the constant section
  $ q \mapsto ( q, \xi ) $, and extended to arbitrary sections by the
  Leibniz rule. We take the standard connection on a trivial bundle,
  where $ \nabla \xi = 0 $ on constant sections.

  Now, an inner product on $\mathfrak{g}$ gives a bundle metric on $A$
  that is constant with respect to the basepoint, i.e.,
  $ g _q ( \xi, \eta ) = g ( \xi, \eta ) $, so $ \nabla $ is a
  metric-compatible connection. It follows that
  $ \overline{ \nabla } _\xi \eta = \operatorname{ad} _\xi \eta $,
  $ \overline{ \nabla } ^\ast _\xi \mu = -\operatorname{ad} ^\ast _\xi
  \mu $, and
  $ \overline{ \nabla } ^\dagger _\xi \eta = - \operatorname{ad}
  ^\dagger _\xi \eta $ for constant sections
  $ \xi, \eta \in \mathfrak{g} $ and $ \mu \in \mathfrak{g}^{\ast}
  $. Writing $ a (t) = \bigl( q (t) , \xi (t) \bigr) $ and
  $ p (t) = \bigl( q (t) , \mu (t) \bigr) $, we have
  $ \xi = g ^\sharp (\mu) $, and \eqref{e:algebroid_newton} becomes
  \begin{align*}
    \dot{q} &= \xi _Q (q) ,\\
    \dot{ \mu } - \operatorname{ad} _\xi ^\ast \mu &= - \rho ^\ast \bigl( \mathrm{d} U (q) \bigr) .
  \end{align*}
  Applying $ g ^\sharp $ to both sides of the second equation gives
  \begin{equation*}
    \dot{ \xi } - \operatorname{ad} _\xi ^\dagger \xi = - \operatorname{grad} U (q) ,
  \end{equation*}
  as in \citet{Martinez2001}. \cref{lem:levi-civita} recovers the
  formula
  $ \nabla ^g _\xi \eta = \frac{1}{2} \bigl( - \operatorname{ad}
  ^\dagger _\xi \eta - \operatorname{ad} ^\dagger _\eta \xi + [\xi,
  \eta ] \bigr) $ for constant sections
  $ \xi, \eta \in \mathfrak{g} $, as in \citet[Section 7]{CoMa2004}.
\end{example}

\begin{remark}
  See \citet*{GrUrGr2006} for an application of generalized geodesic
  flow on a Lie algebroid to Wong's equations \citep{Wong1970}, which
  are a classic example in reduction theory. See also \citet*[Example
  3.18]{LiStTa2017} for a discussion of this example from the
  Lagrangian point of view.
\end{remark}

\section{Generalization of Hamilton's phase space principle}
\label{sec:variational}

\subsection{The variational principle} In this section, we establish a
variational principle for a Hamiltonian
$ H \in C ^\infty ( A ^\ast ) $ whose critical paths are solutions to
the generalized Lie--Poisson dynamics
\eqref{e:lie-poisson_connection}. We begin by describing the
admissible paths and variations.

\begin{definition}
  \label{def:a+a*}
  An \emph{$ ( A \oplus A ^\ast ) $-path} is a $ C ^1 $ path
  $ ( a, p ) \colon I \rightarrow A \oplus A ^\ast $ over a $ C ^2 $
  base path $ q = \tau \circ a = \pi \circ p \colon I \rightarrow Q $
  such that $ \dot{q} = \rho (a) $, i.e., $a$ is an $A$-path. Let
  $ P ( A \oplus A ^\ast ) \subset \widetilde{ P } ( A \oplus A ^\ast
  ) $ denote the Banach submanifold of $ ( A \oplus A ^\ast ) $-paths,
  among all $ C ^1 $ paths with $ C ^2 $ base paths.

  An \emph{admissible variation} of
  $ (a,p) \in P ( A \oplus A ^\ast ) $ is a tangent vector
  $ ( \delta a, \delta p ) \in T _{ ( a,p ) } P ( A \oplus A ^\ast ) $
  such that $ \delta a = X _{ b, a } \in T _a P ( A ) $ is admissible
  in the sense of \cref{def:a-path_variation}. Relative to a
  $ T Q $-connection $ \nabla $ on $A$, we have
  \begin{equation*}
    \delta a ^{\mathrm{ver}} = \overline{ \nabla } _a b , \qquad \delta p ^{\mathrm{ver}} = r , \qquad \delta a ^{\mathrm{hor}} = \delta p ^{\mathrm{hor}} = \rho (b) ,
  \end{equation*}
  where $ ( b, r ) \in \widetilde{ P } ( A \oplus A ^\ast ) $ covers
  the same base path $q$, where $b$ vanishes at the endpoints of $I$,
  and where $b$ and $r$ may otherwise be arbitrary.
\end{definition}

Note that $ \delta a ^{\mathrm{hor}} = \delta p ^{\mathrm{hor}} $ is
necessary for $ ( \delta a, \delta p ) $ to be tangent to
$ P ( A \oplus A ^\ast ) $, which we can see by differentiating the
condition $ \tau \circ a = \pi \circ p $.

\begin{remark}
  \label{rmk:a+a*_algebroid}
  An alternative perspective is that
  $ A \oplus A ^\ast \rightarrow Q $ is in fact a Lie algebroid, where
  the anchor is $ ( a, p ) \mapsto \rho (a) $ and the Lie bracket is
  just that for the $A$-component. From this point of view,
  $ ( A \oplus A ^\ast ) $-paths and admissible variations are a
  special case of the earlier definitions in \cref{sec:algebroids}. In
  particular, we may make use of the results in \citet[Section
  4.2]{CrFe2003} regarding the Banach manifold structure of
  $ P ( A \oplus A ^\ast ) $.
\end{remark}

The usual techniques of calculus of variations may now be applied to
$ P ( A \oplus A ^\ast ) $. Given a functional
$ S \colon P ( A \oplus A ^\ast ) \rightarrow \mathbb{R} $, we denote
\begin{equation*}
  \delta S ( a, p ) \coloneqq \bigl\langle \mathrm{d} S (a,p) , ( \delta a,
  \delta p ) \bigr\rangle ,
\end{equation*}
where $ ( \delta a, \delta p ) $ is an admissible variation of
$ (a, p) $. If
$ \epsilon \mapsto ( a _\epsilon, p _\epsilon ) \in P ( A \oplus A
^\ast ) $ is a curve, i.e., a homotopy of
$ ( A \oplus A ^\ast ) $-paths, such that
$ \frac{\mathrm{d}}{\mathrm{d}\epsilon } ( a _\epsilon, p _\epsilon )
\bigr\rvert _{ \epsilon = 0 } = ( \delta a, \delta p ) $, then we have
$ \delta S ( a, p ) = \frac{\mathrm{d}}{\mathrm{d}\epsilon} S ( a
_\epsilon, p _\epsilon ) \bigr\rvert _{ \epsilon = 0 } $. In
particular, when $S$ has the form
\begin{equation*}
  S ( a, p ) = \int _I \mathcal{L} \bigl( a (t) , p (t) \bigr) \,\mathrm{d}t ,
\end{equation*}
for some
$ \mathcal{L} \colon A \oplus A ^\ast \rightarrow \mathbb{R} $, then
differentiating under the integral sign gives
\begin{equation*}
  \delta S ( a, p ) = \int _I \frac{ \partial }{ \partial \epsilon } \mathcal{L} \bigl( a _\epsilon (t) , p _\epsilon (t) \bigr) \Bigr\rvert _{ \epsilon = 0 } \,\mathrm{d}t = \int _I \Bigl\langle \mathrm{d} \mathcal{L} \bigl( a (t), p (t) \bigr) , \bigl( \delta a (t) , \delta p (t) \bigr) \Bigr\rangle \,\mathrm{d}t .
\end{equation*}
For notational simplicity, we now suppress the dependence of the
integrand on $t$. The $ T Q $-connection $ \nabla $ can be used to
expand the integrand into vertical and horizontal parts,
\begin{equation*}
  \bigl\langle \mathrm{d} \mathcal{L} ( a, p ) , ( \delta a , \delta p ) \bigr\rangle
  = \bigl\langle \mathrm{d} \mathcal{L}  ^{\mathrm{ver}} ( a, p ) , ( \overline{ \nabla } _a b , r ) \bigr\rangle + \bigl\langle \mathrm{d} \mathcal{L} ^{\mathrm{hor}} ( a, p ) , \rho (b) \bigr\rangle .
\end{equation*}
Denoting
$ \mathrm{d} \mathcal{L} ^{\mathrm{ver}} \eqqcolon ( \mathrm{d}
\mathcal{L} ^{\mathrm{ver}} _a , \mathrm{d} \mathcal{L}
^{\mathrm{ver}} _p ) $, we therefore obtain the variational formula
\begin{align*}
  \delta S ( a, p )
  &= \int _I \Bigl( \bigl\langle \mathrm{d} \mathcal{L} _a ^{\mathrm{ver}} ( a, p ), \overline{ \nabla } _a b \bigr\rangle + \bigl\langle r, \mathrm{d} \mathcal{L} _p ^{\mathrm{ver}} ( a, p ) \bigr\rangle + \bigl\langle \mathrm{d} \mathcal{L} ^{\mathrm{hor}} ( a, p ), \rho (b) \bigr\rangle \Bigr) \,\mathrm{d}t \\
  &= \int _I \biggl( \Bigl\langle - \overline{ \nabla } _a ^\ast \mathrm{d} \mathcal{L} ^{\mathrm{ver}} _a ( a, p ) + \rho ^\ast \bigl( \mathrm{d} \mathcal{L} ^{\mathrm{hor}} ( a, p ) \bigr) , b \Bigr\rangle + \bigl\langle r, \mathrm{d} \mathcal{L} _p ^{\mathrm{ver}} ( a, p ) \bigr\rangle \biggr) \,\mathrm{d}t 
\end{align*}
On the second line, we integrate by parts using the dual connection
$ \overline{ \nabla } _a ^\ast $ and the fact that $b$ vanishes at the
endpoints of $I$. Since $b$ and $r$ are otherwise arbitrary paths over
$q$, we conclude that $ \delta S ( a, p ) = 0 $ for all admissible
variations $ ( \delta a, \delta p ) $ if and only if
\begin{equation}
  \label{e:a+a*_elp}
\overline{ \nabla } _a ^\ast \mathrm{d} \mathcal{L} ^{\mathrm{ver}} _a ( a, p ) =   \rho ^\ast \bigl( \mathrm{d} \mathcal{L} ^{\mathrm{hor}} ( a, p ) \bigr), \qquad \mathrm{d} \mathcal{L} _p ^{\mathrm{ver}} ( a, p ) = 0 .
\end{equation}
In light of \cref{rmk:a+a*_algebroid}, this can be seen as a
particular case of the Euler--Lagrange--Poincar\'e equations in
\citet[Theorem 3.12]{LiStTa2017}, discussed further in
\cref{sec:lagrangian}, where $\mathcal{L}$ is a Lagrangian on
$ A \oplus A ^\ast $ viewed as a Lie algebroid.

Note that, using the vertical lifts
$ V _a \colon A _q \rightarrow T _a A $ and
$ V _p \colon A _q ^\ast \rightarrow T _p A ^\ast $, we can write
\begin{alignat*}{2}
  \bigl\langle \mathrm{d} \mathcal{L} _a ^{\mathrm{ver}} ( a, p ) , \overline{ \nabla } _a b  \bigr\rangle &= \bigl\langle \mathrm{d} \mathcal{L} ( a, p ) , V _a ( \overline{ \nabla } _a b ) \bigr\rangle &&= \frac{\mathrm{d}}{\mathrm{d}\epsilon}  \mathcal{L} ( a + \epsilon \overline{ \nabla } _a b , p ) \Bigr\rvert _{ \epsilon = 0 } \\
  \bigl\langle r, \mathrm{d} \mathcal{L} _p ^{\mathrm{ver}} ( a, p ) \bigr\rangle &= \bigl\langle V _p r , \mathrm{d} \mathcal{L} ( a, p ) \bigr\rangle &&= \frac{\mathrm{d}}{\mathrm{d}\epsilon} \mathcal{L} ( a, p + \epsilon r ) \Bigr\rvert _{ \epsilon = 0 } ,
\end{alignat*}
so $ \mathrm{d} \mathcal{L} _a ^{\mathrm{ver}} $ and
$ \mathrm{d} \mathcal{L} _p ^{\mathrm{ver}} $ are simply the fiber
derivatives of $\mathcal{L}$ along the $A$ and $ A ^\ast $ fibers,
respectively.

We now show that a particular choice of $\mathcal{L}$ gives a
variational principle equivalent to the generalized Lie--Poisson
equations for $H$.

\begin{theorem}
  \label{thm:variational}
  Given a Hamiltonian $ H \in C ^\infty ( A ^\ast ) $, an
  $ ( A \oplus A ^\ast ) $-path $ (a, p ) $ satisfies the $ (\pm) $
  generalized Lie--Poisson equations \eqref{e:lie-poisson_connection}
  with $ a = \mp \mathrm{d} H ^{\mathrm{ver}} (p) $ if and only if 
  \begin{equation*}
    \delta \int _I  \bigl( \langle p, a \rangle \pm H ( p) \bigr) \,\mathrm{d}t  = 0 ,
  \end{equation*}
  with respect to admissible variations.
\end{theorem}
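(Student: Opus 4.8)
The plan is to apply the general variational formula \eqref{e:a+a*_elp}, which was derived above for an arbitrary $ \mathcal{L} \colon A \oplus A ^\ast \to \mathbb{R} $, to the specific integrand $ \mathcal{L} ( a, p ) = \langle p, a \rangle \pm H ( p ) $. Thus it suffices to compute the three objects appearing in \eqref{e:a+a*_elp}---the two vertical fiber derivatives $ \mathrm{d} \mathcal{L} _a ^{\mathrm{ver}} $, $ \mathrm{d} \mathcal{L} _p ^{\mathrm{ver}} $ and the horizontal derivative $ \mathrm{d} \mathcal{L} ^{\mathrm{hor}} $---for this $ \mathcal{L} $, and then read off the resulting equations. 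Since the passage from $ \delta S = 0 $ to \eqref{e:a+a*_elp} is an ``if and only if'' (the variations $ b $ and $ r $ being arbitrary modulo the endpoint condition on $ b $), establishing that these equations coincide with \eqref{e:lie-poisson_connection} proves the equivalence in both directions.

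The two fiber derivatives are immediate from their characterization as fiber derivatives of $ \mathcal{L} $ recorded just before the theorem. Differentiating $ \mathcal{L} ( a + \epsilon s, p ) $ in the $ A $-fiber direction $ s $ gives $ \langle p, s \rangle $, since $ H $ does not depend on $ a $; hence $ \mathrm{d} \mathcal{L} _a ^{\mathrm{ver}} ( a, p ) = p $. Differentiating $ \mathcal{L} ( a, p + \epsilon r ) $ in the $ A ^\ast $-fiber direction $ r $ gives $ \langle r, a \rangle \pm \langle r, \mathrm{d} H ^{\mathrm{ver}} ( p ) \rangle $, where I use \cref{lem:splitting} to identify the fiber derivative of $ H $ with its vertical part; thus $ \mathrm{d} \mathcal{L} _p ^{\mathrm{ver}} ( a, p ) = a \pm \mathrm{d} H ^{\mathrm{ver}} ( p ) $. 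The second equation of \eqref{e:a+a*_elp} then reads $ a = \mp \mathrm{d} H ^{\mathrm{ver}} ( p ) $, which is exactly the relation in the statement.

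The main work is the horizontal derivative $ \mathrm{d} \mathcal{L} ^{\mathrm{hor}} $, taken relative to the connection $ \nabla \oplus \nabla ^\ast $ on $ A \oplus A ^\ast $. Here I would pair $ \mathrm{d} \mathcal{L} ( a, p ) $ with the horizontal lift $ ( H _a v, H _p v ) $ of a vector $ v \in T _q Q $, where $ H _a $ and $ H _p $ are the horizontal lifts associated with $ \nabla $ and its dual connection $ \nabla ^\ast $. The key observation---and the one nontrivial point of the proof---is that the canonical pairing $ \langle p, a \rangle $ has \emph{vanishing} horizontal derivative: if $ a ( s ) $, $ p ( s ) $ denote the parallel transports of $ a, p $ along a curve with velocity $ v $, then the defining property of the dual connection gives $ \frac{ \mathrm{d} }{ \mathrm{d} s } \langle p ( s ), a ( s ) \rangle = \langle \nabla ^\ast _v p, a \rangle + \langle p, \nabla _v a \rangle = 0 $. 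Consequently only the $ \pm H ( p ) $ term contributes, yielding $ \mathrm{d} \mathcal{L} ^{\mathrm{hor}} ( a, p ) = \pm \mathrm{d} H ^{\mathrm{hor}} ( p ) $. In other words, the compatibility of $ \nabla $ and $ \nabla ^\ast $ is precisely what makes the pairing term transparent to the horizontal dynamics.

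Substituting these three computations into the first equation of \eqref{e:a+a*_elp} gives $ \overline{ \nabla } _a ^\ast \mathrm{d} \mathcal{L} _a ^{\mathrm{ver}} ( a, p ) = \overline{ \nabla } _a ^\ast p $ on the left and $ \rho ^\ast ( \mathrm{d} \mathcal{L} ^{\mathrm{hor}} ( a, p ) ) = \pm \rho ^\ast ( \mathrm{d} H ^{\mathrm{hor}} ( p ) ) $ on the right, which is exactly \eqref{e:lie-poisson_connection_ver}. Meanwhile, the $ A $-path condition $ \dot{q} = \rho ( a ) $, namely \eqref{e:lie-poisson_connection_hor}, holds automatically because $ ( a, p ) $ is an $ ( A \oplus A ^\ast ) $-path. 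Together with the relation $ a = \mp \mathrm{d} H ^{\mathrm{ver}} ( p ) $ from the second equation of \eqref{e:a+a*_elp}, this recovers the full system \eqref{e:lie-poisson_connection} and completes the argument.
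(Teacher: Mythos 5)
Your proposal is correct and takes essentially the same approach as the paper's proof: both apply the general Euler--Lagrange formula \eqref{e:a+a*_elp} to $ \mathcal{L} ( a, p ) = \langle p, a \rangle \pm H (p) $, compute the two fiber derivatives, and reduce everything to the vanishing of the horizontal derivative of the pairing $ \langle p, a \rangle $ via the defining property of the dual connection $ \nabla ^\ast $. The only cosmetic difference is that you establish this vanishing by differentiating along parallel transports, whereas the paper pairs $ \mathrm{d} \phi $ with the horizontal lifts $ ( H _a v , H _p v ) $ expressed through local sections $ \xi , \mu $ extending $ a , p $ --- the same compatibility fact either way.
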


\begin{proof}
  Equation \eqref{e:lie-poisson_connection_hor} is just the $A$-path
  condition for $a$, so it holds automatically for
  $ ( A \oplus A ^\ast )$-paths.  Let
  $ \mathcal{L} (a,p) = \langle p, a \rangle \pm H (p) $, and let
  $ \nabla $ be a $ T Q $-connection on $A$. Taking the fiber
  derivatives of $ \mathcal{L} $, as above, we see that
  \begin{equation*}
    \mathrm{d} \mathcal{L} _a ^{\mathrm{ver}} ( a, p ) = p , \qquad \mathrm{d} \mathcal{L} _p ^{\mathrm{ver}} ( a, p ) = a \pm \mathrm{d} H ^{\mathrm{ver}} (p) .
  \end{equation*}
  Next, to show that the horizontal derivative of
  $ \phi ( a, p ) \coloneqq \langle a, p \rangle $ vanishes, we use
  the horizontal lifts $ H _a \colon T _q Q \rightarrow T _a A $ of
  $ \nabla $ and $ H _p \colon T _q Q \rightarrow T _p A ^\ast $ of
  $ \nabla ^\ast $. Similarly to the proof of \cref{lem:splitting}, if
  $ \xi \in \Gamma (A) $ and $ \mu \in \Gamma ( A ^\ast ) $ satisfy
  $ \xi (q) = a $ and $ \mu (q) = p $, and if $ v \in T _q Q $, then
  \begin{align*}
    \bigl\langle \mathrm{d} \phi ^{\mathrm{hor}} ( a, p ) , v \bigr\rangle
    &= \bigl\langle \mathrm{d} \phi ( a, p ) , ( H _a v, H _p v ) \bigr\rangle \\
    &= \Bigl\langle \mathrm{d} \phi ( a, p ) , \bigl( T \xi (v) - V _a ( \nabla _v \xi ) , T \mu (v) - V _p ( \nabla _v ^\ast \mu ) \bigr) \Bigr\rangle \\
    &= \bigl\langle \mathrm{d} \phi ( a, p ) , T ( \xi, \mu ) (v) \bigr\rangle - \bigl\langle \mathrm{d} \phi _a ^{\mathrm{ver}} ( a, p ), \nabla _v \xi \bigr\rangle - \bigl\langle \nabla _v ^\ast \mu , \mathrm{d} \phi _p ^{\mathrm{ver}} (a,p) \bigr\rangle \\
    &= v \bigl[ \langle \xi, \mu \rangle \bigr] - \langle \mu , \nabla _v \xi \rangle  - \langle \nabla _v ^\ast \mu , \xi \rangle \\
    &= 0 ,
  \end{align*}
  where the last equality is the defining property of
  $ \nabla ^\ast $. (The preceding lines can be seen as the Leibniz
  rule for the covariant derivative of the tensor
  $ \phi \colon A \otimes A ^\ast \rightarrow \mathbb{R} $.)  Thus,
  \begin{equation*}
    \mathrm{d} \mathcal{L} ^{\mathrm{hor}} ( a, p ) = \pm \mathrm{d} H ^{\mathrm{hor}} (p) .
  \end{equation*}
  We conclude that \eqref{e:a+a*_elp} holds if and only if
  \eqref{e:lie-poisson_connection_ver} and
  $ a = \mp \mathrm{d} H ^{\mathrm{ver}} (p) $ hold.
\end{proof}

\begin{example}
  If $ A = \mathfrak{g} \rightarrow \bullet $ is a Lie algebra, the
  variational principle of \cref{thm:variational} can be restated as
  follows: Find
  $ ( \xi, \mu ) \colon I \rightarrow \mathfrak{g} \oplus \mathfrak{g}
  ^\ast $ such that
  \begin{equation*}
    \delta \int _I \bigl( \langle \mu , \xi \rangle \pm H (\mu) \bigr) \,\mathrm{d}t = 0 ,
  \end{equation*}
  where $ \delta \xi = \dot{ \eta } + [ \xi, \eta ] $ for arbitrary
  $ \eta \colon I \rightarrow \mathfrak{g} $ vanishing at the
  endpoints of $I$, and where $ \delta \mu $ is arbitrary with no
  boundary conditions. This recovers the \emph{Lie--Poisson
    variational principle} of \citet[Theorem 2.1]{CeMaPeRa2003}, who
  present it for the $(-)$ Lie--Poisson equations.
\end{example}

\begin{example}
  If $ A = T Q \rightarrow Q $ is the tangent bundle, recall that the
  $TQ$-path condition is $ a = \dot{q} $, since $\rho$ is the identity
  map. Thus, $TQ$-paths are identified (via tangent prolongation) with
  paths in $Q$, and it follows that $ ( T Q \oplus T ^\ast Q ) $-paths
  are identified with paths in $ T ^\ast Q $. Hence,
  \cref{thm:variational} gives the equivalence of the $ ( \pm ) $
  Hamilton's equations \eqref{e:cotangent_equations} with
  \begin{equation*}
    \delta \int _I \bigl( \langle p, \dot{q} \rangle \pm H ( q, p ) \bigr) \,\mathrm{d}t = 0 .
  \end{equation*}
  The $ (-) $ case, which is the usual sign convention, recovers
  \emph{Hamilton's phase space principle}.
\end{example}

\subsection{Special case: the Hamilton--Poincar\'e variational
  principle and equations}
\label{sec:hamilton-poincare}

If $ A = T Q / G \rightarrow Q / G $ is the Atiyah algebroid of a
principal bundle $ Q \rightarrow Q / G $, we now show that
\cref{thm:variational} recovers the equivalence of the
\emph{Hamilton--Poincar\'e variational principle} and
\emph{Hamilton--Poincar\'e equations} of \citet[Theorem
8.1]{CeMaPeRa2003}. This is similar to the Lie algebroid approach to the
Lagrange--Poincar\'e variational principle and Lagrange--Poincar\'e
equations in \citet*[Section 2.4]{LiStTa2017}, from which we adapt
some of the details. Note that the base of this algebroid is $ Q / G $
rather than $Q$.

As in \citep{LiStTa2017}, we begin by observing that a principal
connection is a right splitting of the Atiyah sequence
\begin{equation*}
  0 \rightarrow \widetilde{ \mathfrak{g}  } \rightarrow T Q / G \xrightarrow{ \rho } T ( Q / G ) \rightarrow 0 ,
\end{equation*}
where $ \widetilde{ \mathfrak{g} } $ denotes the adjoint bundle
$ Q \times _G \mathfrak{g} $, and where a left splitting is a
principal connection $1$-form (\citet[Chapter
5]{Mackenzie2005}). This gives a splitting of the Atiyah algebroid
$ A = T Q / G \cong T ( Q / G ) \oplus \widetilde{ \mathfrak{g} } $,
where $\rho$ is the projection onto the first component. In terms of
this splitting, the bracket of two sections
$ \xi = ( X , \overline{ \xi } ) $ and
$ \eta = ( Y, \overline{ \eta } ) $ is
\begin{equation*}
  \bigl[ ( X , \overline{ \xi } ) , ( Y, \overline{ \eta } ) \bigr] = \bigl( [X, Y], \widetilde{ \nabla } _X \overline{ \eta } - \widetilde{ \nabla } _Y \overline{ \xi } + [ \overline{ \xi } , \overline{ \eta } ] - \widetilde{ R } ( X, Y ) \bigr) ,
\end{equation*}
where $ \widetilde{ \nabla } $ is the covariant derivative and
$ \widetilde{ R } $ the curvature form of the principal connection
(\citet[Equation 3.4]{LiStTa2017}, \citet[Theorem
5.2.4]{CeMaRa2001}, \citet[Theorem 7.3.7]{Mackenzie2005}).

Given an $A$-path $ a = ( x, \dot{x} , \overline{ v } ) $, where $x$
is the base path in $ Q / G $, and an arbitrary path
$ b = ( x, \delta x , \overline{ \eta } ) $ in $A$, it follows from a
calculation in \citep[Section 3.4]{LiStTa2017} that admissible
variations have horizontal component $ \rho (b) = \delta x $ and
vertical component
\begin{equation}
  \label{e:nabla_a_b}
  \overline{ \nabla } _a b = \bigl( \overline{ \nabla } _{ \dot{x} } ( \delta x ) , \widetilde{ \nabla } _{ \dot{x} } \overline{ \eta } + [ \overline{ v }, \overline{ \eta } ] - \widetilde{ R } ( \dot{x}, \delta x ) \bigr) .
\end{equation}
That is, admissible variations have the form $ \delta a = ( \delta x , \delta \dot{x} , \delta \overline{ v } ) $, where
\begin{equation*}
  \delta \dot{x} = \overline{ \nabla } _{ \dot{x} } ( \delta x ) , \qquad \delta \overline{ v } = \widetilde{ \nabla } _{ \dot{x} } \overline{ \eta } + [ \overline{ v } , \overline{ \eta } ] - \widetilde{ R } ( \dot{x} , \delta x ) ,
\end{equation*}
and where $ \delta x $ and $ \overline{ \eta } $ both vanish at the
endpoints of $I$.  Finally, using the principal connection to split
$ A ^\ast = T ^\ast Q / G \cong T ^\ast ( Q / G ) \oplus \widetilde{
  \mathfrak{g} } ^\ast $, we can write
$ p = ( x, y , \overline{ \mu } ) $, whose variations have the form
$ \delta p = ( \delta x , \delta y , \delta \overline{ \mu } ) $,
where $ \delta y $ and $ \delta \overline{ \mu } $ are arbitrary.
It follows that the variational principle in \cref{thm:variational}
can be written as
\begin{equation*}
  \delta \int _I \bigl( \langle y, \dot{x} \rangle + \langle \overline{ \mu } , \overline{ v } \rangle \pm H ( x, y, \overline{ \mu } ) \bigr) \,\mathrm{d}t = 0 ,
\end{equation*}
subject to the admissible variations above. The $ ( - ) $ case is
precisely the Hamilton--Poincar\'e variational principle of
\citet[Theorem 8.1]{CeMaPeRa2003}.

Let us now see how this corresponds to the generalized Lie--Poisson
equations.  First, observe that \eqref{e:lie-poisson_connection_hor}
says that $ a = ( x , \dot{x}, \overline{ v } ) $ is an $A$-path, so
$ a = \mp \mathrm{d} H ^{\mathrm{ver}} (p) $ becomes
\begin{subequations}
  \label{e:hamilton-poincare}
  \begin{align}
    \dot{x} &= \mp \frac{ \partial H }{ \partial y } ,\\
    \overline{ v } &= \mp \frac{ \partial H }{ \partial \overline{ \mu } } .
  \end{align}
Next, observe that, for $ p = ( x, y, \overline{ \mu } ) $ and
arbitrary $ b = ( x, \delta x , \overline{ \eta } ) $, by
\eqref{e:nabla_a_b} we have
\begin{equation*}
  \langle p , \overline{ \nabla } _a b \rangle = \bigl\langle y, \overline{ \nabla } _{ \dot{x} } ( \delta x ) \bigr\rangle + \bigl\langle \overline{ \mu } , \widetilde{ \nabla } _{ \dot{x} } \overline{ \eta } + [ \overline{ v } , \overline{ \eta } ] - \widetilde{ R } ( \dot{x}, \delta x ) \bigr\rangle ,
\end{equation*}
from which it follows that
\begin{equation*}
  \overline{ \nabla } _a ^\ast p = \Bigl( \overline{ \nabla } _{ \dot{x} } ^\ast y + \bigl\langle  \overline{ \mu } , \widetilde{ R } ( \dot{x}, \cdot ) \bigr\rangle , \widetilde{ \nabla } ^\ast _{ \dot{x} } \overline{ \mu } - \operatorname{ad} _{ \overline{ v } } ^\ast \overline{ \mu } \Bigr).
\end{equation*}
Finally, equation \eqref{e:lie-poisson_connection_ver} says that this is equal to
\begin{equation*}
  \pm \rho ^\ast \bigl( \mathrm{d} H ^{\mathrm{hor}} (p ) \bigr) = \biggl( \pm \frac{ \partial H }{ \partial x } , 0 \biggr).
\end{equation*}
That is,
\begin{align}
  \overline{ \nabla } _{ \dot{x} } ^\ast y &= \pm \frac{ \partial H }{ \partial x } - \bigl\langle \overline{ \mu } , \widetilde{ R } ( \dot{x}, \cdot ) \bigr\rangle ,\\
  \widetilde{ \nabla } _{ \dot{x} } ^\ast \overline{ \mu } &= \operatorname{ad} ^\ast _{ \overline{ v } } \overline{ \mu } .
\end{align}
\end{subequations}
In the $ ( - ) $ case, \eqref{e:hamilton-poincare} is the
coordinate-free form of the Hamilton--Poincar\'e equations in
\citet{CeMaPeRa2003}, modulo small differences in notation, e.g.,
\citep{CeMaPeRa2003} write both covariant derivatives
$ \overline{ \nabla } _{ \dot{x} } $ and
$ \widetilde{ \nabla } _{ \dot{x} } $ as $ {D} / {D} t $.

\section{Correspondence to the Lagrangian case}
\label{sec:lagrangian}

We conclude with a discussion of the relationship between Hamiltonian
mechanics on $ A ^\ast $ and Lagrangian mechanics on $A$, linking the
results of the present paper to those of \citet*{LiStTa2017}.

\begin{definition}
  We say $ H \in C ^\infty ( A ^\ast ) $ is \emph{hyperregular} if
  $ \mathrm{d} H ^{\mathrm{ver}} \colon A ^\ast \rightarrow A $ is a
  diffeomorphism. Likewise, $ L \in C ^\infty (A) $ is hyperregular if
  $ \mathrm{d} L ^{\mathrm{ver}} \colon A \rightarrow A ^\ast $ is a
  diffeomorphism.
\end{definition}

\begin{remark}
  Here, $ \mathrm{d} H ^{\mathrm{ver}} $ and
  $ \mathrm{d} L ^{\mathrm{ver}} $ are defined independently of a
  choice of connection, since they are simply the derivatives along
  fibers. These fiber derivatives are often denoted (especially in the
  case $ A = T Q $) by $ \mathbb{F} H $ and $ \mathbb{F} L $ and
  called \emph{Legendre transformations}.
\end{remark}

If $H \in C ^\infty ( A ^\ast ) $ is a hyperregular Hamiltonian, and
$ A ^\ast $ is equipped with the $ ( \pm ) $ generalized Lie--Poisson
structure, we define the Lagrangian $ L \in C ^\infty (A) $ by
\begin{subequations}
  \label{e:H_to_L}
  \begin{equation}
    L ( a ) = \langle p, a \rangle \pm H ( p ) ,
  \end{equation}
  where $p \in A ^\ast $ is defined implicitly by
  $a = \mp \mathrm{d} H ^{\mathrm{ver}} (p) $. It follows by a short
  calculation using the chain rule that $L$ is also hyperregular with
  \begin{equation}
    p = \mathrm{d} L ^{\mathrm{ver}} (a) .
  \end{equation}
\end{subequations}
(See~\citet[Exercise 7.2-3]{MaRa1999} for the case of arbitrary vector
bundles.) Conversely, given a hyperregular Lagrangian
$ L \in C ^\infty (A) $, we may define $ H \in C ^\infty ( A ^\ast ) $
by
\begin{subequations}
  \label{e:L_to_H}
  \begin{equation}
    H (p) = \mp \bigl(  \langle p , a \rangle - L (a) \bigr) ,
  \end{equation}
  where $ a \in A $ is defined implicitly by
  $ p = \mathrm{d} L ^{\mathrm{ver}} (a) $. A calculation similar to the
  one described above shows that $H$ is also hyperregular with
  \begin{equation}
    a = \mp \mathrm{d} H ^{\mathrm{ver}} (p) .
  \end{equation}
\end{subequations}
Thus, the hypothesis of hyperregularity allows one to start with
either a Hamiltonian or Lagrangian and pass to the other.

The following theorem summarizes the main results of this paper, along
with those of \citet*{LiStTa2017}, and establishes their relationship.
Compare \citet[Theorem 8.1]{CeMaPeRa2003} for the special case of the
Atiyah algebroid discussed in \cref{sec:hamilton-poincare}.

\begin{theorem}
  Let $ ( a, p ) $ be an $ ( A \oplus A ^\ast ) $-path, and let
  $ \nabla $ be a $ T Q $-connection on $A$. Given a Hamiltonian
  $ H \in C ^\infty ( A ^\ast ) $, the following are equivalent:
  \begin{enumerate}[label=(\roman*)]
  \item The variational principle
    \begin{equation*}
      \delta \int _I \bigl( \langle p, a \rangle \pm H (p) \bigr) \,\mathrm{d}t = 0 
    \end{equation*}
    holds with respect to admissible variations of
    $ ( A \oplus A ^\ast ) $-paths.

  \item The generalized Lie--Poisson equations
    \begin{equation*}
      \overline{ \nabla } _a ^\ast p = \pm \rho ^\ast \bigl( \mathrm{d} H ^{\mathrm{hor}} (p) \bigr) 
    \end{equation*}
    hold with $ a = \mp \mathrm{d} H ^{\mathrm{ver}} (p) $.
  \end{enumerate}
  Given a Lagrangian $ L \in C ^\infty (A) $, the following are
  equivalent:
  \begin{enumerate}[resume, label=(\roman*)]
  \item The variational principle
    \begin{equation*}
      \delta \int _I L ( a ) \,\mathrm{d}t = 0 
    \end{equation*}
    holds with respect to admissible variations of $A$-paths, and
    $ p = \mathrm{d} L ^{\mathrm{ver}} (a) $.

  \item The Euler--Lagrange--Poincar\'e equations
    \begin{equation*} 
      \overline{ \nabla } _a ^\ast p  = \rho ^\ast \bigl( \mathrm{d} L ^{\mathrm{hor}} (a) \bigr) 
    \end{equation*} 
    hold with $ p = \mathrm{d} L ^{\mathrm{ver}} (a) $.
  \end{enumerate}
  Under the hypothesis of hyperregularity, all of (i)--(iv) are
  equivalent.
\end{theorem}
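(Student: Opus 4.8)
The plan is to establish the four-way equivalence by proving three separate links and then chaining them: the Hamiltonian pair (i)~$\Leftrightarrow$~(ii), the Lagrangian pair (iii)~$\Leftrightarrow$~(iv), and a bridge (ii)~$\Leftrightarrow$~(iv) that is valid only under hyperregularity. The first two links require no new work, while the bridge is where the Legendre transform does the real labor.

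For (i)~$\Leftrightarrow$~(ii), I would simply invoke \cref{thm:variational}, which is exactly this statement. For (iii)~$\Leftrightarrow$~(iv), I would apply the variational formula developed in \cref{sec:variational} to the special case $\mathcal{L}(a,p) = L(a)$, viewing $L$ as a Lagrangian on the Lie algebroid $A$ itself (cf.\ \cref{rmk:a+a*_algebroid}). Since this integrand is independent of $p$, the variation in the $A^\ast$-direction drops out and the second equation in \eqref{e:a+a*_elp} becomes vacuous, so the computation reduces to varying $\int_I L(a)\,\mathrm{d}t$ over $A$-paths with admissible variations $\delta a^{\mathrm{ver}} = \overline{\nabla}_a b$ and $\delta a^{\mathrm{hor}} = \rho(b)$. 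Integrating the vertical term by parts with $\overline{\nabla}_a^\ast$ and using that $b$ vanishes at the endpoints yields $\overline{\nabla}_a^\ast\, \mathrm{d}L^{\mathrm{ver}}(a) = \rho^\ast\bigl(\mathrm{d}L^{\mathrm{hor}}(a)\bigr)$; imposing the common side condition $p = \mathrm{d}L^{\mathrm{ver}}(a)$ gives precisely (iv). This is the Euler--Lagrange--Poincar\'e equation of \citet[Theorem 3.12]{LiStTa2017}, which I would cite as an alternative to the direct computation.

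The heart of the matter is the bridge (ii)~$\Leftrightarrow$~(iv) under hyperregularity. Here I would use the Legendre correspondence \eqref{e:H_to_L}--\eqref{e:L_to_H}, which shows that hyperregularity makes $\mathrm{d}H^{\mathrm{ver}}$ and $\mathrm{d}L^{\mathrm{ver}}$ mutually inverse diffeomorphisms, so the side conditions $a = \mp\,\mathrm{d}H^{\mathrm{ver}}(p)$ and $p = \mathrm{d}L^{\mathrm{ver}}(a)$ are equivalent and pick out the same graph in $A \oplus A^\ast$. It then remains to match the right-hand sides of the two equations, i.e.\ to prove the horizontal-derivative identity $\mathrm{d}L^{\mathrm{hor}}(a) = \pm\,\mathrm{d}H^{\mathrm{hor}}(p)$ on this graph. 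I would obtain this by an envelope-type argument: writing $\mathcal{L}(a,p) = \langle p, a\rangle \pm H(p)$ as in the proof of \cref{thm:variational}, I have already that $\mathrm{d}\mathcal{L}^{\mathrm{hor}}(a,p) = \pm\,\mathrm{d}H^{\mathrm{hor}}(p)$ (the pairing $\phi(a,p) = \langle a,p\rangle$ having vanishing horizontal derivative), and that $\mathrm{d}\mathcal{L}_p^{\mathrm{ver}}(a,p) = a \pm \mathrm{d}H^{\mathrm{ver}}(p)$. Differentiating $L(a) = \mathcal{L}\bigl(a, p(a)\bigr)$ horizontally produces the term $\mathrm{d}\mathcal{L}^{\mathrm{hor}}(a,p)$ plus a contribution proportional to $\mathrm{d}\mathcal{L}_p^{\mathrm{ver}}(a,p)$, and the latter vanishes exactly on the Legendre graph where $a = \mp\,\mathrm{d}H^{\mathrm{ver}}(p)$; hence $\mathrm{d}L^{\mathrm{hor}}(a) = \pm\,\mathrm{d}H^{\mathrm{hor}}(p)$, and the two equations coincide.

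The step I expect to be the main obstacle is precisely this last horizontal-matching identity, because $H$ and $L$ live on the different bundles $A^\ast$ and $A$, so one must check that the connection-induced horizontal splittings of $T^\ast A^\ast$ and $T^\ast A$ are compatible under the fiberwise Legendre transform. The envelope argument sidesteps a direct comparison of the two splittings by reducing everything to the vanishing of $\mathrm{d}\mathcal{L}_p^{\mathrm{ver}}$ along the graph, but making the chain-rule computation rigorous requires care in differentiating the implicitly defined map $a \mapsto p(a)$ horizontally; I would phrase this cleanly using the horizontal lift $H_a$ and the fact established in \cref{thm:variational} that the horizontal derivative of the pairing $\phi$ vanishes. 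With the bridge in hand, chaining (i)~$\Leftrightarrow$~(ii)~$\Leftrightarrow$~(iv)~$\Leftrightarrow$~(iii) gives the equivalence of all four statements under hyperregularity.
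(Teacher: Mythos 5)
Your proposal is correct, and two of its three links coincide with the paper's own proof: the equivalence (i) $\Leftrightarrow$ (ii) is exactly \cref{thm:variational}, and (iii) $\Leftrightarrow$ (iv) is \citep[Theorem 3.12]{LiStTa2017} (your direct derivation, specializing the variational formula \eqref{e:a+a*_elp} to $ \mathcal{L} ( a, p ) = L (a) $ so that the $p$-equation becomes vacuous, is a valid alternative to the citation). Where you genuinely diverge is the hyperregularity bridge. The paper closes the loop at the level of the \emph{variational principles}, proving (i) $\Leftrightarrow$ (iii) directly from \eqref{e:H_to_L} and \eqref{e:L_to_H}: those relations are engineered so that the two integrands agree on the Legendre graph, $ L (a) = \langle p, a \rangle \pm H (p) $ exactly when $ a = \mp \mathrm{d} H ^{\mathrm{ver}} (p) $, so stationarity transfers once one notes that admissible variations of $A$-paths lift along $ p = \mathrm{d} L ^{\mathrm{ver}} (a) $ to admissible variations of $ ( A \oplus A ^\ast ) $-paths. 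You instead close the loop at the level of the \emph{equations}, (ii) $\Leftrightarrow$ (iv), which obliges you to prove the horizontal matching identity $ \mathrm{d} L ^{\mathrm{hor}} (a) = \pm \mathrm{d} H ^{\mathrm{hor}} (p) $ on the graph; your envelope argument for it is sound. Writing $ L = \mathcal{L} \circ \Lambda $ with $ \Lambda (a) = \bigl( a, p (a) \bigr) $ and $ \mathcal{L} ( a, p ) = \langle p, a \rangle \pm H (p) $, the tangent map $ T \Lambda $ carries the horizontal lift $ H _a v $ to $ \bigl( H _a v , H _p v + V _p w \bigr) $ for some $ w \in A ^\ast _q $, and the cross term $ \bigl\langle w , \mathrm{d} \mathcal{L} _p ^{\mathrm{ver}} ( a, p ) \bigr\rangle = \bigl\langle w , a \pm \mathrm{d} H ^{\mathrm{ver}} (p) \bigr\rangle $ vanishes precisely on the graph, leaving $ \mathrm{d} L ^{\mathrm{hor}} (a) = \mathrm{d} \mathcal{L} ^{\mathrm{hor}} ( a, p ) = \pm \mathrm{d} H ^{\mathrm{hor}} (p) $ by the vanishing of $ \mathrm{d} \phi ^{\mathrm{hor}} $ established in the proof of \cref{thm:variational}. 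As for what each route buys: the paper's is shorter because no comparison of the connection-induced splittings of $ T ^\ast A $ and $ T ^\ast A ^\ast $ is ever needed, though it leaves the path-space step (correspondence of variations under the graph map) implicit; yours is pointwise and fiberwise rather than a path-space argument, and in fact the same envelope computation applied to vertical lifts instead of horizontal ones is precisely the ``short calculation using the chain rule'' that the paper invokes to obtain $ p = \mathrm{d} L ^{\mathrm{ver}} (a) $ in \eqref{e:H_to_L}, so your bridge makes explicit machinery on which the paper already relies.
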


\begin{remark}
  In \citep{LiStTa2017}, $ \overline{ \nabla } ^\ast _a $ does not
  denote the dual connection but rather the adjoint of
  $ \overline{ \nabla } _a $, similar to $ \operatorname{ad} $ and
  $ \operatorname{ad} ^\ast $. This is
  $ - \overline{ \nabla } ^\ast _a $ in the notation of the present
  paper, leading to a change of sign in the
  Euler--Lagrange--Poincar\'e equations.
\end{remark}

\begin{proof}
  The equivalence of (i) and (ii) is \cref{thm:variational}, and that
  of (iii) and (iv) is \citep[Theorem 3.12]{LiStTa2017}. Under the
  hypothesis of hyperregularity, (i) implies (iii) by
  \eqref{e:H_to_L}, and conversely, (iii) implies (i) by
  \eqref{e:L_to_H}.
\end{proof}

\footnotesize

\end{document}